\newtheorem{thm}{Theorem}[section]
\newtheorem{cor}[thm]{Corollary}
\newtheorem{prop}[thm]{Proposition}
\theoremstyle{definition}
\theoremstyle{remark}
\newtheorem{rem}{Remark}[section]
\numberwithin{equation}{section}
\renewcommand{\Re}{\hbox{Re}\,}
\renewcommand{\Im}{\hbox{Im}\,}
\newcommand{\C}{\mathbb{C}}
\newcommand{\N}{\mathbb{N}}
\newcommand{\R}{\mathbb{R}}
\newcommand{\Z}{\mathbb{Z}}
\def\tilde{\widetilde}
\def \bfo {\begin {eqnarray*} }
\def \efo {\end {eqnarray*} }
\def \ba {\begin {eqnarray*} }
\def \ea {\end {eqnarray*} }
\def \beq {\begin {eqnarray}}
\def \eeq {\end {eqnarray}}
\def \p {\partial}
\def\tilde{\widetilde}
\def \bfo {\begin {eqnarray*} }
\def \efo {\end {eqnarray*} }
\def \ba {\begin {eqnarray*} }
\def \ea {\end {eqnarray*} }
\def \beq {\begin {eqnarray}}
\def \eeq {\end {eqnarray}}
\def \p {\partial}
\begin{document}

 \title[A Borg-Levinson theorem for  elliptic operators]{A Borg-Levinson theorem for higher order elliptic operators}

\author[Krupchyk]{Katsiaryna Krupchyk}

\address
        {K. Krupchyk, Department of Mathematics and Statistics \\
         University of Helsinki\\
         P.O. Box 68 \\
         FI-00014   Helsinki\\
         Finland}

\email{katya.krupchyk@helsinki.fi}

\author[P\"aiv\"arinta]{Lassi P\"aiv\"arinta}

\address
        {L. P\"aiv\"arinta, Department of Mathematics and Statistics \\
         University of Helsinki\\
         P.O. Box 68 \\
         FI-00014   Helsinki\\
         Finland}

\email{Lassi.Paivarinta@rni.helsinki.fi}

\maketitle

\begin{abstract} 

We establish the Borg-Levinson theorem for  elliptic operators of higher order with constant coefficients. The case of incomplete spectral data is also considered.

\end{abstract}

\section{Introduction}

The classical one-dimensional Borg-Levinson theorem concerns the unique determination of a potential in the Sturm-Liouville problem on a bounded interval from the spectral data.  Already in the original paper by Borg \cite{Borg_1946},  it was shown that a single spectrum in general does not suffice to determine the potential uniquely. The pioneering contributions \cite{Borg_1946, Levinson_1949} have given rise to a fundamental body of results on inverse problems of spectral and scattering theory in dimension one. The monographs \cite{L_book,  M_book, PT_book} give a detailed account of the theory. 

The multidimensional analog of the Borg-Levinson theorem for the Schr\"odinger operator $-\Delta +q$, on a bounded domain, states that the potential $q\in L^\infty$ is uniquely determined  by the Dirichlet eigenvalues and the boundary values of the normal derivatives of the eigenfunctions. This result was established in  \cite{Nachman_Sylveser_Uhlmann_88} and \cite{Nov_1988}.  Stability estimates for this problem have subsequently been obtained in 
\cite{AlesSyl_1990}. More recently, the case of singular potentials was treated in \cite{Pai_Ser_2002}, see also \cite{Chan_1990}, and \cite{Serov} for the case of magnetic potentials.

For one-dimensional Schr\"odinger operators on a bounded interval,  there is a one-to-one correspondence between the potential and the pair of all eigenvalues and normal derivatives of the eigenfunctions, see \cite{PT_book}.  However, this is no longer true for multidimensional Schr\"odinger operators, as it was shown in \cite{Isozaki91}. Specifically, it was proven in the latter paper that for a multidimensional Schr\"odinger operator on a bounded domain, 
the knowledge of all large Dirichlet eigenvalues and the boundary values of the normal derivatives of the corresponding eigenfunctions, still suffices to recover the potential uniquely. This result is usually referred to as the Borg-Levinson theorem with incomplete data.

In the first part of the paper, under some suitable assumptions, we obtain a natural extension of the multidimensional Borg-Levinson theorem due to \cite{Nachman_Sylveser_Uhlmann_88, Nov_1988}, to elliptic operators of higher order with constant coefficients.  We also show that the Laplacian, as 
well as the polyharmonic operators, all satisfy these assumptions.  

The second part of the paper is devoted to the study of the inverse spectral problem for higher order elliptic operators with incomplete data. 
Our approach here is different from the one, developed in \cite{Isozaki91}, and is based on the fact that the linear span of the  products of solutions to the 
Schr\"odinger equation, which satisfy a finite number of linear constraints on the boundary, is  dense in $L^1$ on the domain. We have learned of this idea from \cite{Ramm1993}, 
and  exploit it here in the context of elliptic operators of higher order. In particular, we obtain a natural analogue of the result of \cite{Isozaki91}, in the case of the polyharmonic operator.

Among the works in the field of inverse problems, devoted to the consideration of higher order elliptic operators, we would like to mention the papers
\cite{Ikeh_1991}, \cite{Isakov91} and \cite{Liu_Ch_1996}.  The motivation for such studies comes, in particular, from various problems in physics and geometry, such as quantum field theory \cite{Esp_Kam_1999}, theory of thin elastic plates in mechanics \cite{Villa_book}, as well as conformal  geometry, including the study of the Paneitz-Branson operator \cite{Bran_1985}. 

Finally, we mention that related inverse spectral problems for second order elliptic operators on compact Riemannian manifolds have also been extensively studied. We refer to the papers \cite{Bel_Kur_1992,  KatKur_1998, KurLasWed_2005}, as well as to the monograph \cite{KKL_book} for a detailed exposition of this theory.

The plan of the paper is as follows. Section 2 is devoted to the description of the problem and the statement of results. The extension of the multidimensional Borg-Levinson theorem to higher order elliptic operators is obtained in Section 3, with applications given in Section 4. Finally, the case of incomplete spectral 
data is treated in Section 5.

\section{Statement of results}

Let $P=P(D)$ be an elliptic partial differential operator on $\R^n$, $n\ge 2$,  of order $2m$, $m\ge 1$, with constant real coefficients,
\[
P(D)=\sum_{|\alpha|\le 2m} a_{\alpha}D^\alpha, \quad a_\alpha\in\R, \quad D_j=-i\partial_{x_j},\quad j=1,\dots,n.
\]   
Since the operator $P(D)$ is elliptic, without loss of generality, we may assume that its principal symbol satisfies
\[
\sum_{|\alpha|= 2 m} a_{\alpha}\xi^\alpha>0, \quad 0\ne\xi\in\R^n. 
\]
Therefore, through an application of the Fourier transform, we see that
\[
(P\varphi,\varphi)_{L^2(\R^n)}\ge C_1\|\varphi\|_{H^m(\R^n)}^2 -C_2\|\varphi\|^2_{L^2(\R^n)}, \quad \varphi\in C^\infty_0(\R^n), \quad C_1>0,\quad C_2\in\R.
\]
Here $H^m(\R^n)$ is the standard Sobolev space on $\R^n$.  

Let $\Omega\subset \R^n$ be a bounded domain with a $C^\infty$-boundary. Associated to $\Omega$, we have the Sobolev spaces $H^s(\Omega)$ and $H^s(\p \Omega)$, $s\in \R$.  Let $\gamma$ and $\tilde\gamma$ be the Dirichlet and Neumann trace operators, respectively,  given by
\begin{align*}
&\gamma:H^{2m}(\Omega)\to \mathcal{H}^{0,m-1}(\p \Omega):= \prod_{j=0}^{m-1}H^{2m-j-1/2}(\p \Omega),\\
&\gamma u=(u|_{\p\Omega},\p_{\nu}u|_{\p \Omega},\dots,\p_{\nu}^{m-1}u|_{\p \Omega}),\\
&\tilde \gamma: H^{2m}(\Omega)\to \mathcal{H}^{m,2m-1}(\p \Omega):= \prod_{j=m}^{2m-1}H^{2m-j-1/2}(\p \Omega),\\
&\tilde \gamma u=(\p_{\nu}^{m}u|_{\p \Omega},\dots,\p_{\nu}^{2m-1}u|_{\p \Omega}),
\end{align*}
which are bounded and surjective, see \cite{Grubbbook2009}.  Here $\nu$ is the unit outer normal to the boundary $\p \Omega$. 

The Friedrichs extension of $P$, defined on $C_0^\infty(\Omega)$, still denoted by $P$, is a self-adjoint operator semi-bounded from below, with the domain
\[
\mathcal{D}(P)=\{u\in H^{2m}(\Omega):\gamma u=0\},
\]
see \cite{Grubbbook2009}. 
Let $q\in L^\infty(\Omega)$ be real-valued.  Then by the Kato-Rellich theorem \cite{Kato_book},  the operator $P+q$ is self-adjoint on the domain $\mathcal{D}(P)$, and the spectrum of $P+q$ is discrete, accumulating at $+\infty$, consisting of eigenvalues of finite multiplicity, 
\[
-\infty <\lambda_{1,q}\le \lambda_{2,q}\le \dots\le \lambda_{k,q}\to+\infty,\quad k\to +\infty.
\] 
Associated to the eigenvalues $\lambda_{k,q}$, we have the eigenfunctions $\varphi_{k,q}\in \mathcal{D}(P)$, which form an orthonormal basis in $L^2(\Omega)$. The eigenvalues $\lambda_{k,q}$ and the eigenfunctions $\varphi_{k,q}$ will be referred to as the Dirichlet eigenvalues, respectively, the Dirichlet eigenfunctions, of $P+q$.

Let 
\[
P(\xi)=\sum_{|\alpha|\le 2m}a_{\alpha}\xi^\alpha, \quad \xi\in\R^n
\]
be the full symbol of the operator $P$. We let 
$
P(\zeta)=\sum_{|\alpha|\le 2m}a_{\alpha}\zeta^\alpha$,  $\zeta\in\C^n$, stand for the holomorphic continuation to $\C^n$.  
Following 
\cite{Hormander_book_II}, we set 
\[
\tilde P(\xi)=(\sum_{|\alpha|\ge 0}|P^{(\alpha)}(\xi)|^2)^{1/2}, \quad P^{(\alpha)}(\xi)=\p^{\alpha}_{\xi} P(\xi),\quad  \alpha\in \N^{n},
\]

In order to state our results, 
we shall introduce the following assumptions on $P(\xi)$:
\begin{itemize}
\item[\textbf{(A1)}] There exists a non-empty open subset
$U\subset \R^n$ and $\lambda_0>0$, such that for any $\xi\in U$ and any $\lambda\le -\lambda_0$, there are   $\zeta_1, \zeta_2\in \C^n$ such that 
\[
P(\zeta_j)=\lambda,\quad j=1,2,\quad \xi=\zeta_1-\overline{\zeta_2}.  
\]
\item[\textbf{(A2)}] Let $L_\zeta(\xi)=P(\xi+\zeta)-P(\zeta)$ and $P^{-1}(\lambda)=\{\zeta\in \C^n:P(\zeta)=\lambda\}$.  Assume that
\[
\sup_{\xi\in \R^n,\zeta\in P^{-1}(\lambda)}\frac{1}{\tilde L_\zeta(\xi)}\to 0,\quad \lambda\to -\infty. 
\]
\end{itemize}
Here we may notice that if $\lambda<0$ with $|\lambda|$ large enough, then $P^{-1}(\lambda)\cap\R^n=\emptyset$.

We have the following generalization of an $n$-dimensional Borg-Levinson theorem \cite{Nachman_Sylveser_Uhlmann_88}. 

\begin{thm}
\label{thm_main}

Assume that \emph{(A1)} and  \emph{(A2)} hold.  Let $q_1,q_2\in L^\infty(\Omega)$ be real-valued and $\varphi_{k,q_1}$ be an orthonormal basis in $L^2(\Omega)$ of the Dirichlet eigenfunctions of $P+q_1$. Furthermore, assume that  the Dirichlet eigenvalues $\lambda_{k,q_j}$ of $P+q_j$ satisfy
\begin{equation}
\label{eq_spec_d_1}
\lambda_{k,q_1}=\lambda_{k,q_2}, \quad k=1,2,\dots,
\end{equation}
and that there exists an orthonormal basis in $L^2(\Omega)$ of the Dirichlet eigenfunctions $\varphi_{k,q_2}$ of $P+q_2$ such that
\begin{equation}
\label{eq_spec_d_2}
\tilde \gamma\varphi_{k,q_1}=\tilde \gamma\varphi_{k,q_2},\quad k=1,2,\dots. 
\end{equation}
Then $q_1=q_2$. 
\end{thm}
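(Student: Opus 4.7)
I would follow the Nachman-Sylvester-Uhlmann paradigm \cite{Nachman_Sylveser_Uhlmann_88}, adapted from $-\Delta+q$ to the present higher order operator. The proof splits into three ingredients: (i) the given spectral data determines an analogue of the Dirichlet-to-Neumann map $\Lambda_{q_j}(\lambda):\gamma u\mapsto\tilde\gamma u$, where $(P+q_j-\lambda)u=0$, for every $\lambda$ outside both spectra; (ii) the equality $\Lambda_{q_1}(\lambda)=\Lambda_{q_2}(\lambda)$ converts via Green's formula to an Alessandrini-type orthogonality $\int_\Omega(q_1-q_2)u_1 v\,dx=0$ for all solutions $u_1,v$ of $(P+q_1-\lambda)u_1=0$ and $(P+q_2-\lambda)v=0$; (iii) one fills out the Fourier transform of $q_1-q_2$ on the open set $U$ by a family of complex geometric optics solutions at $\lambda\to-\infty$, whose existence and smallness is supplied by (A1) and (A2).

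For (i), fix $\lambda$ outside both spectra. Given $f\in\mathcal{H}^{0,m-1}(\partial\Omega)$, the problem $(P+q_j-\lambda)u=0$, $\gamma u=f$ has a unique solution in $H^{2m}(\Omega)$. Writing $u=u_0+w$ with $u_0$ a fixed extension of $f$ and $w\in\mathcal{D}(P)$, expanding $w=\sum_k c_k\varphi_{k,q_j}$, and applying Green's formula for $P$ together with $\gamma\varphi_{k,q_j}=0$ and $(P+q_j)\varphi_{k,q_j}=\lambda_{k,q_j}\varphi_{k,q_j}$, each coefficient $c_k$ reduces to the inner product $\langle u_0,\varphi_{k,q_j}\rangle$ plus a $(\lambda_{k,q_j}-\lambda)^{-1}$-multiple of a bilinear boundary pairing of $f$ against $\tilde\gamma\varphi_{k,q_j}$. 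Taking $\tilde\gamma$ of the resulting series expresses $\Lambda_{q_j}(\lambda)f$ through $\{\lambda_{k,q_j},\tilde\gamma\varphi_{k,q_j}\}_k$ alone, so (\ref{eq_spec_d_1})--(\ref{eq_spec_d_2}) give $\Lambda_{q_1}(\lambda)=\Lambda_{q_2}(\lambda)$.

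For (ii), the higher order Green formula yields a bilinear boundary form $J_P$ depending only on $(\gamma,\tilde\gamma)$-data with $\int_\Omega[(Pu)v-u(Pv)]\,dx=J_P(u,v)$. Applying this to $u_1$ and $v$ and using the two equations at real $\lambda$ gives $J_P(u_1,v)=\int_\Omega(q_2-q_1)u_1 v\,dx$. Letting $\tilde u_2$ solve $(P+q_2-\lambda)\tilde u_2=0$ with $\gamma\tilde u_2=\gamma u_1$, step (i) forces $\tilde\gamma\tilde u_2=\tilde\gamma u_1$, so $J_P(u_1,v)=J_P(\tilde u_2,v)=0$ since $\tilde u_2$ and $v$ now satisfy a common equation. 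This gives the desired orthogonality.

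For (iii), fix $\xi\in U$ and let $\lambda\le-\lambda_0$ avoid both spectra. By (A1), pick $\zeta_1,\zeta_2\in\C^n$ with $P(\zeta_j)=\lambda$ and $\zeta_1-\overline{\zeta_2}=\xi$, and try the CGO ansatz $u_1(x)=e^{ix\cdot\zeta_1}(1+r_1(x))$, $v(x)=e^{ix\cdot\zeta_2}(1+r_2(x))$. Since $P(D)(e^{ix\cdot\zeta}\varphi)=e^{ix\cdot\zeta}P(D+\zeta)\varphi$ for constant coefficient $P$, the equations reduce to $L_{\zeta_j}(D)r_j=-q_j(1+r_j)$. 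By Hörmander's fundamental solution theorem \cite{Hormander_book_II}, $L_{\zeta_j}(D)$ admits a right inverse $E_{\zeta_j}$ on $L^2(\Omega)$ of norm controlled by $\sup_\xi\tilde L_{\zeta_j}(\xi)^{-1}$, which by (A2) tends to $0$ uniformly in $\zeta_j\in P^{-1}(\lambda)$ as $\lambda\to-\infty$. A Neumann series argument then produces $r_j$ with $\|r_j\|_{L^2(\Omega)}\to 0$. Since $P$ has real coefficients, $\overline{v}$ also solves $(P+q_2-\lambda)\overline{v}=0$, and
\[
u_1\,\overline{v}=e^{ix\cdot\xi}\bigl(1+r_1+\overline{r_2}+r_1\overline{r_2}\bigr).
\]
Inserting into the orthogonality from (ii) and passing to the limit $\lambda\to-\infty$ along non-spectral values gives $\widehat{q_1-q_2}(-\xi)=0$ for every $\xi\in U$; since $q_1-q_2$ is compactly supported in $\Omega$, its Fourier transform is entire on $\C^n$, so vanishing on the open set $U$ forces $q_1=q_2$. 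The principal obstacle is the CGO step, namely producing $L^2$-small remainders uniformly over $\zeta_j\in P^{-1}(\lambda)$ for operators $L_{\zeta_j}(D)$ of order $2m$ at fixed $\xi$ while $\lambda\to-\infty$, which is precisely the quantitative content of (A2) combined with Hörmander's $L^2$-bound on $E_{\zeta_j}$.
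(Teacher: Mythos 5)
Your steps (ii) and (iii) coincide with the paper's Propositions \ref{prop_q_1_q2}, \ref{prop_completeness} and \ref{prop_geometric_optics}, and are sound. The genuine gap is in step (i), which is precisely where the difficulty of the multidimensional Borg--Levinson theorem lies. Writing $u_{q_j,f}(\lambda)=F+w$ with $F$ solving a fixed $q$-independent problem, the eigenfunction expansion of $w$ has coefficients of size $(\lambda_{k,q_j}-\lambda)^{-1}\sim k^{-m/n}$ (times boundary pairings of $f$ with $\tilde\gamma\varphi_{k,q_j}$), while $\|\varphi_{k,q_j}\|_{H^{2m}(\Omega)}\sim\lambda_{k,q_j}\sim k^{m/n}$ by Weyl asymptotics. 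The general term of the series therefore does not even tend to zero in $H^{2m}(\Omega)$; the series converges only in $L^2(\Omega)$, and you cannot apply $\tilde\gamma$ term by term. So the spectral data \eqref{eq_spec_d_1}--\eqref{eq_spec_d_2} does \emph{not}, by this argument, determine $\Lambda_{q_j}(\lambda)$ at a fixed $\lambda$, and the claimed conclusion $\Lambda_{q_1}(\lambda)=\Lambda_{q_2}(\lambda)$ is unjustified as stated.

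The paper repairs this in two moves that your outline is missing. First (Proposition \ref{prop_deriv_DN_map}), it differentiates $l$ times in $\lambda$, producing coefficients of size $(\lambda_{k,q_j}-\lambda)^{-(l+1)}$; for $(l-1)m>n$ the series then converges in $H^{2m}(\Omega)$, the traces can be taken termwise, and the hypotheses give only
\[
\frac{d^{l}}{d\lambda^{l}}\bigl(\Lambda_{q_1}(\lambda)f-\Lambda_{q_2}(\lambda)f\bigr)=0,
\]
i.e.\ $\Lambda_{q_1}(\lambda)-\Lambda_{q_2}(\lambda)$ is a polynomial in $\lambda$ of degree at most $l-1$. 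Second (Proposition \ref{prop_norm_dirichlet-to-neumann}), a resolvent estimate combined with Sobolev interpolation shows $\|\Lambda_{q_1}(\lambda)-\Lambda_{q_2}(\lambda)\|\to 0$ as $\lambda\to-\infty$ in a slightly weaker boundary norm, which forces the polynomial to vanish identically. Only then does your step (ii) apply. You should either incorporate these two asymptotic arguments or find a substitute for them; without one, step (i) is a gap, not a routine computation.
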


It will be shown in Section \ref{sec_applications} that  
the conditions (A1) and (A2) are satisfied for the Laplace operator $P=-\Delta$ and the polyharmonic operator $P=(-\Delta)^m$, $m\ge 2$. Let us also notice that the assumptions (A1) and (A2) are similar to those, which occur in \cite{Isakov91}. In the latter work, such  assumptions  are introduced in order to guarantee the completeness of the products of solutions of the equation $(P+q)u=0$ for general constant coefficient partial differential operators $P$.

In our approach to Theorem \ref{thm_main} we have been inspired  by the exposition of the classical multidimensional Borg-Levinson theorem, given in 
\cite{Choulli_book}. Rather than using scattering solutions of $(P+q-\lambda)u=0$, constructed in all of $\R^n$, for $\lambda>0$ large enough as in \cite{Nachman_Sylveser_Uhlmann_88},   following \cite{Choulli_book}, we shall make use of complex geometric optics solutions, constructed in $\Omega$ for $\lambda<0$, $|\lambda|$ sufficiently large. 

The second main result of this work is concerned with the Borg-Levinson problem with incomplete spectral data in the case when  
$P=(-\Delta)^m$ is the polyharmonic operator. It can be viewed as an analog of the result of \cite{Isozaki91} valid in the case of the Laplacian.

\begin{thm}
\label{thm_isozaki_bi}
Let $q_1,q_2\in L^\infty(\Omega)$ be real-valued and $\varphi_{k,q_1}$ be an orthonormal basis in $L^2(\Omega)$
of the Dirichlet eigenfunctions of the operator $(-\Delta)^m+q_1$, $m\ge 2$.  Assume that there exists  an integer  $N>0$ such that the Dirichlet eigenvalues $\lambda_{k,q_j}$ of $(-\Delta)^m+q_j$ satisfy
\[
\lambda_{k,q_1}=\lambda_{k,q_2},\quad \forall k>N,
\]
and there exists an orthonormal basis in $L^2(\Omega)$ of the Dirichlet  eigenfunctions $\varphi_{k,q_2}$ of $(-\Delta)^m+q_2$ such that
\[
\tilde \gamma\varphi_{k,q_1}=\tilde \gamma\varphi_{k,q_2},\quad \forall k>N. 
\]
Then $q_1=q_2$.
\end{thm}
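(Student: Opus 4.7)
The plan is to adapt the proof of Theorem \ref{thm_main} to the setting of incomplete spectral data, exploiting the observation of \cite{Ramm1993} that density of products of solutions in $L^1$ survives the imposition of finitely many linear constraints.

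First, I fix $\lambda < 0$ with $|\lambda|$ sufficiently large that $\lambda$ lies in the resolvent set of both $P_j := (-\Delta)^m + q_j$, $j=1,2$, and that the complex geometric optics solutions from the proof of Theorem \ref{thm_main} are available. Green's identity for $(-\Delta)^m$ together with a standard computation yields an Alessandrini-type boundary identity
\begin{equation*}
\int_\Omega (q_1 - q_2)\, u_1\, \overline{u_2}\, dx = \langle (\Lambda_1(\lambda) - \Lambda_2(\lambda))\gamma u_1, \gamma u_2\rangle_{\p\Omega},
\end{equation*}
valid whenever $u_j \in H^{2m}(\Omega)$ solves $(P_j - \lambda)u_j = 0$. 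Here $\Lambda_j(\lambda)\colon \mathcal{H}^{0,m-1}(\p\Omega)\to \mathcal{H}^{m,2m-1}(\p\Omega)$ is the Dirichlet-to-Neumann type map associated to $P_j - \lambda$.

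Next, expanding the resolvent $(P_j - \lambda)^{-1}$ in the orthonormal basis $\{\varphi_{k,q_j}\}$ and using that $\lambda_{k,q_1} = \lambda_{k,q_2}$ and $\tilde\gamma\varphi_{k,q_1} = \tilde\gamma\varphi_{k,q_2}$ for every $k > N$, I express $\Lambda_1(\lambda) - \Lambda_2(\lambda)$ as an absolutely convergent series whose tail over $k > N$ cancels, leaving a finite-rank operator of rank at most $2N$, whose range lies in the subspace of $\mathcal{H}^{m,2m-1}(\p\Omega)$ spanned by the (unknown) boundary traces $\tilde\gamma\varphi_{k,q_j}$ for $k \le N$, $j=1,2$. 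Restricting $\gamma u_1, \gamma u_2$ to a joint subspace $V_\lambda$ of $\mathcal{H}^{0,m-1}(\p\Omega) \times \mathcal{H}^{0,m-1}(\p\Omega)$ of finite codimension on which this finite-rank form vanishes, I obtain
\begin{equation*}
\int_\Omega (q_1 - q_2)\, u_1\, \overline{u_2}\, dx = 0
\end{equation*}
for every pair of CGO solutions whose Dirichlet data lies in $V_\lambda$.

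The final and most delicate step is to show that the products $u_1\overline{u_2}$ of such \emph{constrained} CGO solutions remain dense in $L^1(\Omega)$. Since the polyharmonic symbol satisfies \emph{(A1)} with $U = \R^n$ (verified in Section \ref{sec_applications}), for each $\xi \in \R^n$ the set of pairs $(\zeta_1, \zeta_2) \in \C^n\times\C^n$ with $P(\zeta_j) = \lambda$ and $\zeta_1 - \overline{\zeta_2} = \xi$ is a nonempty complex variety of positive dimension, producing an infinite-dimensional family of CGO solutions whose product asymptotically recovers $e^{i\xi\cdot x}$ as $|\lambda|\to\infty$. The $2N$ linear boundary constraints defining $V_\lambda$ cut this family by only finite codimension, so ample freedom remains, and passing to the limit yields $\widehat{(q_1 - q_2)\chi_\Omega}(\xi) = 0$ for every $\xi \in \R^n$, hence $q_1 = q_2$. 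The main obstacle is precisely this last density step: one has to verify that the boundary functionals coming from the missing first $N$ eigenmodes do not conspire to annihilate an entire asymptotic family of CGO solutions producing a given Fourier mode, which requires careful tracking of how these functionals restrict to the CGO parameter manifold, together with uniform control of the CGO remainder estimates when the boundary data is constrained.
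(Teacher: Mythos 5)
Your overall strategy is the same as the paper's: use a Green/Alessandrini identity, show that the hypotheses reduce the boundary term to finitely many explicit linear functionals of the traces of the test solution, and then prove that products of CGO solutions subject to finitely many linear boundary constraints are still dense in $L^1(\Omega)$. However, the proposal has a genuine gap exactly where you flag "the main obstacle": the density of products of \emph{constrained} CGO solutions is asserted via a dimension-counting heuristic ("the constraints cut the family by only finite codimension, so ample freedom remains"), not proved. The difficulty is that for a fixed $\xi$ the CGO solutions with $\zeta_1-\overline{\zeta_2}=\xi$ form a nonlinear parameter variety, not a linear space, so "finite codimension" does not by itself produce a constrained solution whose product still isolates the Fourier mode $e^{i\xi\cdot x}$. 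The paper resolves this by a specific construction: for each $\xi$ it builds $mN+1$ CGO solutions $u_{q_1,\lambda,\zeta_1+\eta_l}$ whose phases $\zeta_1+\eta_l$ are chosen \emph{independent of} $\xi$ and all lie on $P^{-1}(\lambda)$; linear algebra then gives a normalized combination $\sum_l c_l u_{q_1,\lambda,\zeta_1+\eta_l}$ in the kernel of the constraints with coefficients $c_l$ independent of $\xi$, and after extracting a convergent subsequence $c_l\to\tilde c_l$ the limit of the products is not $e^{i\xi\cdot x}$ but $e^{i\frac{\xi}{2}\cdot x}\sum_l\tilde c_l e^{ilx_1}$. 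One then concludes $f\cdot\sum_l\tilde c_l e^{ilx_1}=0$ and uses analyticity of the (nonzero) multiplier to get $f=0$. Your claim that "passing to the limit yields $\widehat{(q_1-q_2)\chi_\Omega}(\xi)=0$" skips all of this, and the $\xi$-independence of the phases and of the kernel coefficients is essential for the final step to work.

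A secondary issue: in your second step you expand the resolvent and claim an "absolutely convergent series whose tail over $k>N$ cancels," leaving a finite-rank operator. The eigenfunction series for $u_{q_j,f}(\lambda)$ converges only in $L^2(\Omega)$, which is too weak to take the Neumann-type traces term by term; the two tails do not cancel as functions on $\Omega$ (only their traces agree), so one cannot subtract them inside an $L^2$-convergent series and then apply $\tilde\gamma$. The paper handles this by differentiating $l$ times in $\lambda$ with $(l-1)m>n$ so that the Weyl asymptotics give convergence in $H^{2m}(\Omega)$, cancelling the tail there, integrating back up in $\lambda$, and invoking the decay of $\Lambda_{q_1}(\lambda)-\Lambda_{q_2}(\lambda)$ (Proposition \ref{prop_norm_dirichlet-to-neumann}) to eliminate the resulting polynomial integration constants. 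This step needs to be supplied, though it is more routine than the density argument.
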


We shall finish this section by introducing, for future reference, the Dirichlet--to--Neumann map, associated to the operator $P+q_j-\lambda$.   When doing so, suppose that $\lambda_0>0$ is large enough so that for any $\lambda\le -\lambda_0$,  zero is not in the spectrum of the operator $P+q_j-\lambda$, $j=1,2$, equipped with the domain $\mathcal{D}(P)$. Here and in what follows the domain $\mathcal{D}(P)$ will be provided with the graph norm, which is easily seen to be equivalent to $\|\cdot\|_{H^{2m}(\Omega)}$. 
For any $\lambda<-\lambda_0$ and any $f\in\mathcal{H}^{0,m-1}(\p\Omega)$, the Dirichlet boundary problem
\begin{equation}
\label{eq_bvp}
\begin{aligned}
(P+q_j-\lambda)u &=0,\quad \textrm{in}\quad \Omega,\\
\gamma u &=f,\quad \textrm{on}\quad \p \Omega,
\end{aligned} 
\end{equation}
has a  unique solution $u_{q_j,f}(\lambda)\in H^{2m}(\Omega)$ and
\[
\|u_{q_j,f}(\lambda)\|_{H^{2m}(\Omega)}\le C\|f\|_{\mathcal{H}^{0,m-1}(\p \Omega)}, 
\]
see \cite{Grubbbook2009}. 
Here $C>0$ may depend on $\lambda$. 
Thus, for any $\lambda\le -\lambda_0$, we define the Dirichlet--to--Neumann map by
\[
\Lambda_{q_j}(\lambda)(f)=\tilde \gamma u_{q_j,f}(\lambda),
\]
which is a bounded map
\[
\Lambda_{q_j}(\lambda):
\mathcal{H}^{0,m-1}(\p \Omega)\to \mathcal{H}^{m,2m-1}(\p \Omega).
\]

\section{Proof of Theorem \ref{thm_main}}

Let us start this section by providing  a general outline of the proof of Theorem \ref{thm_main}.  
The first step is to construct complex geometric optics solutions of the equations 
\begin{equation}
\label{eq_201}
(P+q_j-\lambda)u_j(\lambda)=0 \quad  \textrm{in}\quad \Omega, \quad j=1,2.
\end{equation}
This is possible to achieve thanks to the assumption (A2), combined with the general estimate for a right inverse in $L^2(\Omega)$ of the operator $P$, stated in Theorem \ref{thm_isakov} below. One subsequently uses the assumption (A1) to establish the density of the linear span of products of solutions $u_1(\lambda) \overline{u_2(\lambda)}$ of the equations \eqref{eq_201}, for $\lambda<0$ with $|\lambda|$ sufficiently large. The density result implies that in order to establish that $q_1=q_2$, it suffices to show  the equality of the corresponding Dirichlet--to--Neumann maps $\Lambda_{q_1}(\lambda)=\Lambda_{q_2}(\lambda)$, for all $\lambda<0$ with $|\lambda|$ large enough. 
The latter is done using general arguments, involving only the assumptions  \eqref{eq_spec_d_1} and \eqref{eq_spec_d_2} in Theorem \ref{thm_main}.

We shall now proceed with the detailed proof of Theorem \ref{thm_main}. Let us first  recall the following result due to \cite{Isakov91},  where it is 
obtained as a consequence of the general theory of \cite{Hormander_book_II}.

\begin{thm} 
\label{thm_isakov}

Let $P$ be a partial differential operator on $\R^n$  with constant coefficients and let $\Omega$ be a bounded domain in $\R^n$. Then there exists a bounded linear operator $E\in \mathcal{L}(L^2(\Omega))$ such that
\[
PEf=f, \quad \textrm{for all}\ f\in L^2(\Omega),
\]
and for any partial differential operator $Q$ with constant coefficients, we have
\[
\|Q(D)E\|_{\mathcal{L}(L^2(\Omega))}\le C\sup_{\xi\in \R^n}\frac{\tilde Q(\xi)}{\tilde P(\xi)},
\]
where $C>0$ depends only on $n$, $\Omega$, and the order of $P$. 
\end{thm}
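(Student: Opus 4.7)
The plan is to build $E$ as a Hörmander fundamental parametrix for $P(D)$, following the strategy in Chapter X of Hörmander's second volume. The central analytic input, which I would establish first, is the Taylor-expansion equivalence
\[
\tilde P(\xi)\ \asymp\ \sup_{|\eta|\le 1}|P(\xi+\eta)|, \qquad \xi\in\R^n,
\]
with constants depending only on $n$ and $\deg P$, together with the analogous upper bound $|Q(\xi+\zeta)|\le C\tilde Q(\xi)$ for $|\zeta|\le 1$. These are elementary, but they are what converts pointwise information about the polynomial into information about the weights $\tilde P$, $\tilde Q$.

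After translating and rescaling one may assume $\Omega\subset B(0,1)$. Fix a unit vector $\theta\in\R^n$. For each $\xi$, the polynomial $z\mapsto P(\xi+z\theta)$ has at most $2m$ zeros in $\C$, so by a pigeonhole argument there is a radius $r(\xi)\in[1/2,1]$ such that the circle $\{z: |z|=r(\xi)\}$ stays a definite distance from every zero, yielding $|P(\xi+r(\xi)e^{i\varphi}\theta)|\ge c\,\tilde P(\xi)$ uniformly in $\varphi\in[0,2\pi]$. I would then define, for $f\in L^2(\Omega)$ extended by zero to $\R^n$,
\[
Ef(x)\ =\ \frac{1}{(2\pi)^{n+1}} \int_{\R^n}\int_0^{2\pi}\frac{\hat f(\xi)\,e^{ix\cdot\xi}\,e^{ir(\xi)e^{i\varphi}x\cdot\theta}}{P(\xi+r(\xi)e^{i\varphi}\theta)}\,d\varphi\,d\xi.
\]
The identity $PEf=f$ then follows because applying $P(D)$ under the integral produces the factor $P(\xi+r(\xi)e^{i\varphi}\theta)$ that cancels the denominator, while the mean value property in $\varphi$ reduces the exponential factor $e^{ir(\xi)e^{i\varphi}x\cdot\theta}$ to $1$ (its Taylor series in $e^{i\varphi}$ has zero mean except for the constant term).

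For the operator norm bound, Plancherel and the uniform estimate $|e^{ir(\xi)e^{i\varphi}x\cdot\theta}|\le e^{|x|}\le e$ on $\Omega$ absorb the exponential factor into a constant depending only on $\diam\Omega$. Applying $Q(D)$ under the integral replaces the exponential by $Q(\xi+r(\xi)e^{i\varphi}\theta)\,e^{ir(\xi)e^{i\varphi}x\cdot\theta}$, so that by Step~1,
\[
\|Q(D)Ef\|_{L^2(\Omega)}\ \le\ C\sup_{\xi\in\R^n}\frac{\tilde Q(\xi)}{\tilde P(\xi)}\,\|f\|_{L^2(\Omega)},
\]
with $C$ depending only on $n$, $\diam\Omega$, and $\deg P$.

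The main obstacle I anticipate is making the choice of $r(\xi)$ and the contour manipulation rigorous: one needs $\xi\mapsto r(\xi)$ to be at least measurable for the Fubini steps, and one must justify moving $P(D)$ inside the $\varphi$-integral despite the appearance of $\xi$-dependence in the contour. Hörmander sidesteps the pointwise choice of $r(\xi)$ by averaging instead against a smooth weight $F(z)$ supported on an annulus in $\C$, chosen once and for all so that $\oint F(z)/P(\xi+z\theta)\,dz$ is both well-defined and bounded by $C/\tilde P(\xi)$; adopting that device removes the measurability and deformation worries while preserving the three-line end of the estimate above.
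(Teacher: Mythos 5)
The crux of your construction is the asserted lower bound $|P(\xi+r(\xi)e^{i\varphi}\theta)|\ge c\,\tilde P(\xi)$ on a zero-avoiding circle in a single, fixed complex direction $\theta$, and this step is a genuine gap: as stated it is false. First, you impose no condition on $\theta$ (such as non-vanishing of the principal symbol at $\theta$), so $z\mapsto P(\xi+z\theta)$ may be constant; e.g.\ $P(\xi)=\xi_1$, $\theta=e_2$ gives $P(\xi+z\theta)\equiv\xi_1$, which vanishes at $\xi_1=0$ while $\tilde P\ge 1$. Second, and more seriously, even for a good direction the zero-avoiding circle only yields $|P(\xi+r e^{i\varphi}\theta)|\ge c_m\sup_{|w|\le 1}|P(\xi+w\theta)|$, and this supremum controls only the directional derivatives $\partial_\theta^k P(\xi)$, not $\tilde P(\xi)$, which involves all derivatives: for $P(\xi)=\xi_1^2+M\xi_2$, $\theta=e_1$, $\xi=0$ one has $\sup_{|w|\le1}|P(we_1)|=1$ while $\tilde P(0)\sim M$. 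In particular no constant depending only on $n$, $\Omega$ and the order of $P$ can rescue the circle estimate, and that uniformity is exactly what the theorem asserts and what the paper uses: the theorem is applied to $L_\zeta(D)=P(\zeta+D)-P(\zeta)$, whose lower-order coefficients blow up as $\zeta\in P^{-1}(\lambda)$, $\lambda\to-\infty$ (already for the Laplacian, $L_\zeta(\xi)=\xi\cdot\xi+2\zeta\cdot\xi$ is precisely of the above form), so a constant depending on the coefficients of $P$ would destroy Proposition \ref{prop_geometric_optics}.

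The repair is not the smoothing/measurability device you invoke at the end: averaging over an annulus in one fixed complex line with a fixed weight is the Malgrange--Ehrenpreis construction (Theorem 7.3.10 in volume I of \cite{Hormander_book_II}); it bounds the denominator from below essentially by the leading coefficient in the chosen direction and yields a fundamental solution, but not the weighted estimate $\|Q(D)E\|\le C\sup_\xi \tilde Q(\xi)/\tilde P(\xi)$. What is needed is a \emph{regular} fundamental solution, i.e.\ Theorem 10.2.1 of \cite{Hormander_book_II}, whose construction in effect lets the complex displacement depend on $\xi$ (averaging over a full-dimensional set of displacements, with constants depending only on $n$ and the degree), combined with the $B_{p,k}$ mapping property (Theorem 10.3.7 there) to get the $L^2(\Omega)$ bound after localizing; this is how \cite{Isakov91} obtains the statement, and the paper simply quotes it without proof. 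Two smaller points: your ``Plancherel'' step is not literally a multiplier estimate, since $e^{ir(\xi)e^{i\varphi}x\cdot\theta}$ couples $x$ and $\xi$ (it can be handled by expanding the exponential in a power series and using Minkowski's inequality, at the cost of a factor $e^{\operatorname{diam}\Omega}$), and the formal cancellation giving $PEf=f$ does need the denominator bound to justify differentiation under the integral; but both are cosmetic compared with the missing lower bound against $\tilde P$.
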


Let $\tilde \Omega\supset\supset \Omega$ be an open bounded subset of $\R^n$ with $C^\infty$-smooth boundary. 
Let $q\in L^\infty(\Omega)$ and let us extend $q$ to $\tilde \Omega$ by setting $q=0$ in $\tilde \Omega\setminus\Omega$.  
The following result is a generalization of \cite[Proposition 2.10]{Choulli_book} to higher order elliptic operators, see also \cite{Isakov91}.

\begin{prop}
\label{prop_geometric_optics}
Assume that \emph{(A2)} holds. Then there exists $\lambda_0>0$ such that for any $\lambda<-\lambda_0$ and any $\zeta\in P^{-1}(\lambda)$, there are solutions
\[
u_{\lambda,\zeta}=e^{i\zeta\cdot x}(1+w_{\lambda,\zeta})\in L^2(\tilde \Omega)
\]
to the equation $(P+q-\lambda)u=0$ in $\tilde \Omega$
with $\|w_{\lambda,\zeta}\|_{L^2(\tilde \Omega)}\to 0$ as $\lambda\to-\infty$. 

\end{prop}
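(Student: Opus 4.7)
The plan is to substitute the ansatz $u = e^{i\zeta\cdot x}(1+w)$ into the equation and use the identity $P(D)(e^{i\zeta\cdot x} f) = e^{i\zeta\cdot x} P(D+\zeta) f$, together with the fact that $\zeta \in P^{-1}(\lambda)$, to reduce the equation for $u$ to an inhomogeneous equation for $w$ whose leading operator is $L_\zeta(D)$. A short computation using $P(D+\zeta) = P(\zeta) + L_\zeta(D)$ and $P(\zeta)=\lambda$ shows that $(P+q-\lambda)u=0$ in $\tilde\Omega$ is equivalent to
\begin{equation*}
L_\zeta(D) w + q w = -q \quad \text{in } \tilde\Omega,
\end{equation*}
where $q$ has been extended by zero outside $\Omega$.

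Next I would apply Theorem \ref{thm_isakov} to the constant-coefficient (complex) operator $L_\zeta(D)$ on the bounded domain $\tilde\Omega$: this yields a bounded right inverse $E_\zeta \in \mathcal{L}(L^2(\tilde\Omega))$ with $L_\zeta(D) E_\zeta = I$, and, choosing $Q=1$ so that $\tilde Q \equiv 1$, the operator-norm bound
\begin{equation*}
\|E_\zeta\|_{\mathcal{L}(L^2(\tilde\Omega))} \le C \sup_{\xi \in \R^n} \frac{1}{\tilde L_\zeta(\xi)},
\end{equation*}
with $C$ depending only on $n$, $\tilde\Omega$ and the order of $P$. Assumption (A2) is tailor-made to give $\|E_\zeta\|_{\mathcal{L}(L^2(\tilde\Omega))} \to 0$ uniformly in $\zeta \in P^{-1}(\lambda)$ as $\lambda \to -\infty$.

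The final step is a Neumann-series fixed-point argument: I rewrite the equation for $w$ as
\begin{equation*}
(I + E_\zeta M_q) w = - E_\zeta q,
\end{equation*}
where $M_q$ denotes multiplication by $q$, which is bounded on $L^2(\tilde\Omega)$ with norm $\|q\|_{L^\infty}$. Choosing $\lambda_0>0$ so large that $\|E_\zeta\|_{\mathcal{L}(L^2(\tilde\Omega))} \cdot \|q\|_{L^\infty} < 1/2$ for all $\lambda < -\lambda_0$ and all $\zeta \in P^{-1}(\lambda)$, I invert $I + E_\zeta M_q$ via a geometric series and obtain a unique solution $w_{\lambda,\zeta} \in L^2(\tilde\Omega)$ satisfying
\begin{equation*}
\|w_{\lambda,\zeta}\|_{L^2(\tilde\Omega)} \le 2 \|E_\zeta q\|_{L^2(\tilde\Omega)} \le 2 \|E_\zeta\|_{\mathcal{L}(L^2(\tilde\Omega))} \|q\|_{L^2(\tilde\Omega)} \longrightarrow 0,
\end{equation*}
as $\lambda \to -\infty$. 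The function $u_{\lambda,\zeta} = e^{i\zeta\cdot x}(1+w_{\lambda,\zeta})$ then lies in $L^2(\tilde\Omega)$ (since $e^{i\zeta\cdot x}$ is bounded on the bounded set $\tilde\Omega$) and solves $(P+q-\lambda)u=0$ by construction.

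The only non-routine point is the application of Theorem \ref{thm_isakov} to $L_\zeta(D)$: one must check that Hörmander's $L^2$-solvability result admits the complex-coefficient operator $L_\zeta(D)$ as an allowed input and that the quantity $\tilde L_\zeta(\xi)$ appearing in (A2) is the correct one (namely the $\tilde P$ of Hörmander applied to $L_\zeta$ with $\xi$ as the real frequency variable), so that (A2) yields the operator-norm decay. Once these identifications are in place, the rest of the argument is a standard contraction/Neumann-series construction and poses no real difficulty.
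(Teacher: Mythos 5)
Your proof is correct and follows essentially the same route as the paper: the same reduction of $(P+q-\lambda)u=0$ to $L_\zeta(D)w=-q(1+w)$ via the conjugation identity (the paper phrases this through Leibniz' formula), the same application of Theorem \ref{thm_isakov} to $L_\zeta(D)$ combined with (A2) to make $\|E_\zeta\|_{\mathcal{L}(L^2(\tilde\Omega))}$ small, and an equivalent small-perturbation argument (your Neumann series in place of the paper's Banach contraction) yielding the same bound $\|w_{\lambda,\zeta}\|_{L^2(\tilde\Omega)}\le 2\|E_\zeta\|\,\|q\|_{L^2(\tilde\Omega)}$. The point you flag about admissibility of the complex-coefficient operator $L_\zeta(D)$ in H\"ormander's $L^2$ right-inverse theorem is indeed the only identification to check, and it holds.
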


\begin{proof} 
Set $u_{\lambda,\zeta}=e^{i\zeta\cdot x}(1+w_{\lambda,\zeta})$. Then using Leibniz' formula, 
\[
P(D)(uv)=\sum_{|\alpha|\ge 0} \frac{1}{\alpha!}P^{(\alpha)}(D)u D^{\alpha}v,
\] 
and the fact that $P(\zeta)=\lambda$, we get
\begin{align*}
P(D)u_{\lambda,\zeta}=\lambda u_{\lambda,\zeta}+\sum_{|\alpha|\ge 1} \frac{1}{\alpha!}P^{(\alpha)}(\zeta)e^{i\zeta\cdot x} D^{\alpha}w_{\lambda,\zeta}.
\end{align*}
In order that  $(P+q-\lambda)u_{\lambda,\zeta}=0$, the correction $w_{\lambda,\zeta}$ should satisfy
\begin{equation}
\label{eq_w}
L_{\zeta}(D)w_{\lambda,\zeta}=-q(1+w_{\lambda,\zeta}),
\end{equation}
where
\[
L_{\zeta}(D)=\sum_{|\alpha|\ge 1} \frac{1}{\alpha!}P^{(\alpha)}(\zeta)D^{\alpha}=P(\zeta+D)-P(\zeta).
\]
By Theorem \ref{thm_isakov}, there is an operator $E_\zeta\in \mathcal{L}(L^2(\tilde \Omega))$ such that
\[
L_{\zeta}(D)E_\zeta f=f,
\]
for any $f\in L^2(\tilde \Omega)$, and 
\[
\|E_\zeta \|_{\mathcal{L}(L^2(\tilde \Omega))}\le C\sup_{\xi\in\R^n}\frac{1}{\tilde L_\zeta(\xi)}\le 
C\sup_{\xi\in\R^n,\zeta\in P^{-1}(\lambda)}\frac{1}{\tilde L_\zeta(\xi)},
\]
where a constant $C>0$ depends only  on $m$, $n$, and $\tilde \Omega$.
The assumption (A2) implies that $\|E_\zeta \|_{\mathcal{L}(L^2(\tilde \Omega))}\to 0$, as $\lambda\to-\infty$, uniformly in $\zeta\in P^{-1}(\lambda)$. Hence, there exists $\lambda_0>0$ large enough such that  the map
\[
F_\zeta: L^2(\tilde \Omega)\to L^2(\tilde \Omega),\quad f\mapsto E_\zeta(-q(1+f))
\]
is a contraction for any $\zeta\in P^{-1}(\lambda)$ and any $\lambda\le -\lambda_0$. Thus, $F_\zeta$ has a unique fixed point $w_{\lambda,\zeta}\in L^2(\tilde \Omega)$, and therefore, \eqref{eq_w} holds. Furthermore, for $\lambda<0$, $|\lambda|$ large, we have
\begin{align*}
\|w_{\lambda,\zeta}\|_{L^2(\tilde\Omega)}&\le \|E_\zeta q\|_{L^2(\tilde \Omega)}+\|E_\zeta qw_{\lambda,\zeta}\|_{L^2(\tilde \Omega)}\\
&\le \|E_\zeta \|_{\mathcal{L}(L^2(\tilde \Omega))}\|q\|_{L^2(\tilde \Omega)}+\|E_\zeta \|_{\mathcal{L}(L^2(\tilde \Omega))}\|q\|_{L^\infty(\tilde \Omega)}\|w_{\lambda,\zeta}\|_{L^2(\tilde \Omega)}\\
&\le \|E_\zeta \|_{\mathcal{L}(L^2(\tilde \Omega))}\|q\|_{L^2(\tilde \Omega)}+\frac{1}{2}\|w_{\lambda,\zeta}\|_{L^2(\tilde\Omega)}.
\end{align*}
The claim follows. 

\end{proof}

The solutions $u_{\lambda,\zeta}$ constructed in Proposition \ref{prop_geometric_optics} will be referred to as the complex geometric optics solutions. See \cite{SylUhl_87} for the original construction of such solutions in the case of the Laplacian.

We can now obtain the following density result. 

\begin{prop}
\label{prop_completeness}
Suppose that the assumptions \emph{(A1)} and \emph{(A2)} hold. Then there exists $\lambda_0>0$ such that 
the set
\[
S(q_1,q_2,\lambda_0)
=\emph{span} \cup_{\lambda\le -\lambda_0}F(q_1,q_2,\lambda),
\]
where 
\begin{align*}
F(q_1,&q_2,\lambda)\\
&=\{u_{q_1}(\lambda) \overline{u_{q_2}(\lambda)}: u_{q_j}(\lambda)\in H^{2m}(\Omega), (P+q_j-\lambda)u_{q_j}(\lambda)=0 \ \emph{in} \  \Omega,j=1,2\}, 
\end{align*}
 is dense in $L^1(\Omega)$.
\end{prop}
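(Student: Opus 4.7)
The plan is to combine Hahn--Banach duality with the complex geometric optics solutions constructed in Proposition~\ref{prop_geometric_optics}, and thereby reduce the density claim to a Fourier uniqueness statement. Since $(L^1(\Omega))^*=L^\infty(\Omega)$, it suffices to show that any $f\in L^\infty(\Omega)$ satisfying $\int_\Omega f\,u_{q_1}(\lambda)\,\overline{u_{q_2}(\lambda)}\,dx=0$ for every $\lambda\le-\lambda_0$ and every pair of solutions $u_{q_j}(\lambda)\in H^{2m}(\Omega)$ to $(P+q_j-\lambda)u=0$ must vanish identically. I fix such an $f$ and extend it by zero to $\R^n$.

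For each $\lambda\le-\lambda_0$ and each $\zeta_j\in P^{-1}(\lambda)$, Proposition~\ref{prop_geometric_optics} produces complex geometric optics solutions $u_{q_j,\lambda,\zeta_j}=e^{i\zeta_j\cdot x}(1+w_{j,\lambda,\zeta_j})$ of $(P+q_j-\lambda)u=0$ on $\tilde\Omega$, with $\|w_{j,\lambda,\zeta_j}\|_{L^2(\tilde\Omega)}\to 0$ as $\lambda\to-\infty$. Since $qu\in L^2$, interior elliptic regularity for $P$ guarantees that their restrictions to $\Omega$ belong to $H^{2m}(\Omega)$, so they are admissible. Now fix $\xi$ in the open set $U$ provided by assumption~(A1) and, for every $\lambda\le-\lambda_0$, invoke~(A1) to choose vectors $\zeta_1(\lambda),\zeta_2(\lambda)\in\C^n$ with $P(\zeta_j)=\lambda$ and $\zeta_1-\overline{\zeta_2}=\xi$.

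Multiplying the two CGO solutions then produces
\[
u_{q_1,\lambda,\zeta_1}\,\overline{u_{q_2,\lambda,\zeta_2}} \;=\; e^{i\xi\cdot x}\bigl(1+r_\lambda\bigr),
\]
where $r_\lambda=w_{1,\lambda,\zeta_1}+\overline{w_{2,\lambda,\zeta_2}}+w_{1,\lambda,\zeta_1}\overline{w_{2,\lambda,\zeta_2}}$. Because $\xi\in\R^n$ the prefactor $e^{i\xi\cdot x}$ is of modulus one on $\Omega$, and Cauchy--Schwarz combined with Proposition~\ref{prop_geometric_optics} yields $\|r_\lambda\|_{L^1(\Omega)}\to 0$ as $\lambda\to-\infty$. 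Substituting into the vanishing pairing gives
\[
\hat f(-\xi) \;=\; \int_\Omega f(x)\,e^{i\xi\cdot x}\,dx \;=\; -\int_\Omega f(x)\,e^{i\xi\cdot x}\,r_\lambda(x)\,dx \;\longrightarrow\; 0,
\]
so $\hat f$ vanishes on the nonempty open set $-U\subset\R^n$. Since $f$ has compact support in $\Omega$, its Fourier transform extends to an entire function on $\C^n$, and vanishing on an open real subset forces $\hat f\equiv 0$, hence $f=0$ by Fourier uniqueness.

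The step carrying the real content is verifying that the remainder $r_\lambda$ is small in $L^1(\Omega)$ uniformly in the choice of $\zeta_j$. This depends decisively on~(A1): the individual exponentials $e^{i\zeta_j\cdot x}$ may grow or decay exponentially in $|\lambda|$, but when $\zeta_1-\overline{\zeta_2}$ is constrained to lie in the \emph{real} open set $U$, their product collapses to a unimodular factor. That is exactly what allows the $L^2$-smallness of $w_{j,\lambda,\zeta_j}$ furnished by~(A2) to transfer to $L^1$-smallness of $r_\lambda$, and hence to the Fourier vanishing that closes the argument.
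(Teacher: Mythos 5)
Your proposal is correct and follows essentially the same route as the paper: Hahn--Banach duality, the CGO solutions of Proposition~\ref{prop_geometric_optics} with $\zeta_1-\overline{\zeta_2}=\xi\in U$ supplied by (A1), passage to the limit $\lambda\to-\infty$ to kill the remainder, and real-analyticity of the Fourier transform of the compactly supported $f$ to conclude $f=0$.
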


\begin{proof}
Assume that $f\in L^\infty(\Omega)$ is such  that
\[
\int_{\Omega}fgdx=0,
\]
for any $g\in S(q_1,q_2,\lambda_0)$. Let $\lambda_0$ be the largest of the values $\lambda_0$, occurring  in the assumption (A1) and 
Proposition \ref{prop_geometric_optics}. Then for any $\lambda\le -\lambda_0$ and any $\xi\in U\subset\R^n$, there are $\zeta_j\in P^{-1}(\lambda)$
such that $\xi=\zeta_1-\overline{\zeta_2}$. Let $u_{\lambda,\zeta_j}\in L^2(\tilde \Omega)$ be the  complex geometric optics solutions, constructed in Proposition \ref{prop_geometric_optics}. Then $Pu_{\lambda,\zeta_j}\in L^2(\tilde \Omega)$, and 
 by elliptic regularity,  $u_{\lambda,\zeta_j}\in H^{2m}(\Omega)$. As $\overline{e^{i\zeta_2\cdot x}}=e^{-i\overline{\zeta_2}\cdot x}$, we have
\[
\int_\Omega f u_{\lambda,\zeta_1}\overline{u_{\lambda,\zeta_2}}dx=\int_\Omega e^{i\xi\cdot x}f dx+\int_\Omega gdx=0,
\]
with $g=e^{i\xi\cdot x}(w_{\lambda,\zeta_1}+\overline{w_{\lambda,\zeta_2}}+w_{\lambda,\zeta_1}\overline{w_{\lambda,\zeta_2})}f$. 
As
\[
\bigg|\int_\Omega gdx\bigg|\le C(\|w_{\lambda,\zeta_1}\|_{L^2(\Omega)}+\|w_{\lambda,\zeta_2}\|_{L^2(\Omega)}+\|w_{\lambda,\zeta_1}\|_{L^2(\Omega)}\|w_{\lambda,\zeta_2}\|_{L^2(\Omega)})\to 0, 
\]
as $\lambda\to-\infty$, we have
\[
\int_\Omega e^{i\xi\cdot x}f dx=0,\quad \forall\xi\in U.
\]
Since $\Omega$ is bounded, the left hand side is a real analytic function on $\R^n$, which has been shown 
to vanish on the open set $U$. Thus, $f=0$. This proves that $S(q_1,q_2,\lambda_0)$ is dense in $L^1(\Omega)$.

\end{proof}

\begin{rem}

Notice that when $q_1=q_2=0$, the conclusion of Proposition \ref{prop_completeness} remains valid, assuming that only assumption (A1) holds. 

\end{rem}

In what follows, we shall need the generalization of  Green's formula, given in \cite{Agmon_book, Folland_book},

\begin{equation}
\label{eq_green}
(Pu,v)_{L^2(\Omega)}-(u,Pv)_{L^2(\Omega)}=\sum_{j=0}^{2m-1}\int_{\p \Omega} N_{2m-1-j}(u)\overline{\p_{\nu}^j v}dS.
\end{equation}
Here $N_k $ is a linear differential operator of order $k$, defined in a neighborhood of $\p \Omega$, which contains the term $\p_\nu^k$ with a non-vanishing coefficient and $dS$ is the surface measure on $\p \Omega$.

The following result is a generalization of \cite[Theorem 1.5]{Nachman_Sylveser_Uhlmann_88} to higher order elliptic operators.

\begin{prop}
\label{prop_q_1_q2}
Assume that \emph{(A1)} and \emph{(A2)} hold. 
Let $q_1,q_2\in L^\infty(\Omega)$ and assume that there exists $\lambda_0>0$  such that the corresponding Dirichlet--to--Neumann maps satisfy
$\Lambda_{q_1}(\lambda)=\Lambda_{q_2}(\lambda)$ for any $\lambda\le -\lambda_0$. Then $q_1=q_2$. 
\end{prop}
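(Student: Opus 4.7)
The plan is to combine Green's formula \eqref{eq_green} for the operator $P$ with the hypothesis $\Lambda_{q_1}(\lambda)=\Lambda_{q_2}(\lambda)$ to derive the orthogonality relation
\[
\int_\Omega (q_2-q_1)\, u_1 \overline{v_2}\, dx = 0
\]
for every $\lambda \le -\lambda_0$ and every pair $u_1, v_2 \in H^{2m}(\Omega)$ satisfying $(P+q_1-\lambda)u_1=0$ and $(P+q_2-\lambda)v_2=0$ in $\Omega$. Once this is established, Proposition \ref{prop_completeness} yields $q_1=q_2$: the function $q_2-q_1 \in L^\infty(\Omega) \subset (L^1(\Omega))^*$ annihilates the dense subspace $S(q_1,q_2,\lambda_0)$, forcing it to vanish.

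To obtain the orthogonality relation, I would apply \eqref{eq_green} to the pair $u_1, v_2$ relative to the operator $P+q_2-\lambda$. Since $q_2-\lambda$ is real-valued multiplication, it produces no boundary terms, and the identity reads
\[
\int_\Omega (q_2-q_1)u_1 \overline{v_2}\,dx = \sum_{j=0}^{2m-1}\int_{\p \Omega} N_{2m-1-j}(u_1)\,\overline{\p_\nu^j v_2}\,dS,
\]
after rewriting $(P+q_2-\lambda)u_1 = (q_2-q_1)u_1$ by means of $(P+q_1-\lambda)u_1=0$ and using $(P+q_2-\lambda)v_2=0$ to kill the remaining volume term.

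The heart of the matter is to verify that the boundary sum on the right-hand side vanishes. For this, let $\tilde u_1 \in H^{2m}(\Omega)$ be the unique solution of $(P+q_2-\lambda)\tilde u_1 = 0$ with $\gamma \tilde u_1 = \gamma u_1$. The equality $\Lambda_{q_1}(\lambda)=\Lambda_{q_2}(\lambda)$ then yields $\tilde\gamma \tilde u_1 = \Lambda_{q_2}(\lambda)(\gamma u_1) = \Lambda_{q_1}(\lambda)(\gamma u_1) = \tilde\gamma u_1$, so $u_1$ and $\tilde u_1$ share all normal derivatives up to order $2m-1$ on $\p\Omega$. Since each $N_{2m-1-j}$ is a differential operator of order at most $2m-1$ near $\p\Omega$, its boundary trace depends only on the data $(\gamma, \tilde\gamma)$ of its argument, and hence $N_{2m-1-j}(u_1) = N_{2m-1-j}(\tilde u_1)$ on $\p\Omega$. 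Applying \eqref{eq_green} once more to the pair $\tilde u_1, v_2$, which now both satisfy the same equation $(P+q_2-\lambda)w=0$, the left-hand side collapses to zero, so the boundary sum must vanish as well.

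The main obstacle, minor as it is, lies in the justification that the trace of $N_k(u)$ on $\p\Omega$ is fully determined by the pair $(\gamma u, \tilde\gamma u)$. This is an algebraic fact about the structure of the Green boundary operators from \cite{Agmon_book, Folland_book}: tangential derivatives of $u|_{\p\Omega}$ are dictated by $\gamma u$, normal derivatives up to order $m-1$ by $\gamma u$, and normal derivatives from $m$ to $2m-1$ by $\tilde\gamma u$, which together span all derivatives of $u$ of order $\le 2m-1$ on $\p\Omega$. Granted this, the proof reduces to the classical Green-identity argument of the second-order theory, now coupled with the higher-order density result Proposition \ref{prop_completeness}.
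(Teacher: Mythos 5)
Your argument is correct and is essentially the paper's own proof, merely reorganized: the paper applies Green's formula once to the difference $u=u_{q_2,f}(\lambda)-u_{q_1,f}(\lambda)$, whose full Cauchy data $(\gamma u,\tilde\gamma u)$ vanish by the hypothesis $\Lambda_{q_1}(\lambda)=\Lambda_{q_2}(\lambda)$, so that all boundary terms $N_{2m-1-j}(u)|_{\p\Omega}$ drop out, whereas you apply Green's formula twice (to $u_1,v_2$ and to $\tilde u_1,v_2$) and subtract, using the same structural fact that the trace of $N_k$ is determined by the Cauchy data of order $\le 2m-1$. Both routes yield the orthogonality relation and then invoke Proposition \ref{prop_completeness} in the same way, so the proposal is sound.
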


\begin{proof}
Let $f\in \mathcal{H}^{0,m-1}(\p \Omega)$ and $u_{q_j,f}(\lambda)\in H^{2m}(\Omega)$ be solutions to 
\eqref{eq_bvp}.  Set
\[
u=u_{q_2,f}(\lambda)-u_{q_1,f}(\lambda). 
\]
Then, for $\lambda\le -\lambda_0$,
\begin{align*}
(P+q_2-\lambda)u&=(q_1-q_2)u_{q_1,f}(\lambda),\quad \textrm{in}\quad \Omega\\
\gamma u&=0, \quad \textrm{on}\quad \p\Omega.
\end{align*}
Since $\Lambda_{q_1}(\lambda)=\Lambda_{q_2}(\lambda)$ , we have
\[
\tilde \gamma u=0, \quad \textrm{on}\quad \p\Omega
\]
For any $v\in H^{2m}( \Omega)$ solving  $(P+q_2-\lambda)v=0$ in $\Omega$, by an application of the Green's formula
\eqref{eq_green}, we get
\[
\int_{\Omega}(q_1-q_2)u_{q_1,f}(\lambda)\overline{v}dx=0. 
\]
Proposition \ref{prop_completeness} implies that $q_1=q_2$. This completes the proof.

\end{proof}

Notice that 
\[
\mathcal{H}^{m,2m-1}(\p \Omega)\subset \mathcal{H}^{m,2m-1,\varepsilon}(\p \Omega):=\prod_{j=m}^{2m-1}H^{2m-\varepsilon-j-1/2}(\p \Omega),\quad \forall\varepsilon>0.
\]

\begin{prop}
\label{prop_norm_dirichlet-to-neumann}
For any small $\varepsilon>0$,
\[
\|\Lambda_{q_1}(\lambda)-\Lambda_{q_2}(\lambda)\|_{\mathcal{L}(\mathcal{H}^{0,m-1}(\p \Omega),\mathcal{H}^{m,2m-1,\varepsilon}(\p \Omega))}\to 0,\quad \lambda\to-\infty. 
\]
\end{prop}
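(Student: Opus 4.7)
The plan is to express $(\Lambda_{q_1}(\lambda)-\Lambda_{q_2}(\lambda))f=\tilde\gamma w$ with $w:=u_{q_1,f}(\lambda)-u_{q_2,f}(\lambda)\in\mathcal{D}(P)$ solving
\[
(P+q_1-\lambda)w=(q_2-q_1)u_{q_2,f}(\lambda),\qquad \gamma w=0,
\]
and to bound $w$ in an interpolation space close enough to $H^{2m}(\Omega)$ to control $\tilde\gamma w$ in $\mathcal{H}^{m,2m-1,\varepsilon}(\p\Omega)$. The essential input is that $P+q_1$ is self-adjoint and semi-bounded below, so for all sufficiently negative $\lambda$ the resolvent $(P+q_1-\lambda)^{-1}$ exists with operator norm at most $C/|\lambda|$ on $L^2(\Omega)$; this, together with the equivalence of the graph norm and $\|\cdot\|_{H^{2m}(\Omega)}$ on $\mathcal{D}(P)$, is the backbone of the argument.

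First I would establish the uniform bound $\|u_{q_j,f}(\lambda)\|_{L^2(\Omega)}\le C\|f\|_{\mathcal{H}^{0,m-1}(\p\Omega)}$. Pick a lifting $u_0\in H^{2m}(\Omega)$ with $\gamma u_0=f$ and $\|u_0\|_{H^{2m}(\Omega)}\le C\|f\|_{\mathcal{H}^{0,m-1}(\p\Omega)}$, and write $u_{q_j,f}(\lambda)=u_0+v$ where $v\in\mathcal{D}(P)$ solves $(P+q_j-\lambda)v=-(P+q_j-\lambda)u_0$. The crude estimate $\|(P+q_j-\lambda)u_0\|_{L^2}\le C(1+|\lambda|)\|f\|_{\mathcal{H}^{0,m-1}(\p\Omega)}$ is absorbed by the $C/|\lambda|$ resolvent bound, giving $\|v\|_{L^2}\le C\|f\|_{\mathcal{H}^{0,m-1}(\p\Omega)}$ and hence the claim. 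Applying the resolvent once more, $w=(P+q_1-\lambda)^{-1}[(q_2-q_1)u_{q_2,f}(\lambda)]$ yields
\[
\|w\|_{L^2(\Omega)}\le C|\lambda|^{-1}\|f\|_{\mathcal{H}^{0,m-1}(\p\Omega)}.
\]
Next, the graph-norm equivalence gives $\|w\|_{H^{2m}(\Omega)}\le C(\|Pw\|_{L^2}+\|w\|_{L^2})$, and substituting $Pw=(q_2-q_1)u_{q_2,f}(\lambda)-q_1 w+\lambda w$ bounds $\|Pw\|_{L^2}\le C\|f\|_{\mathcal{H}^{0,m-1}(\p\Omega)}+|\lambda|\cdot C|\lambda|^{-1}\|f\|_{\mathcal{H}^{0,m-1}(\p\Omega)}$, so $\|w\|_{H^{2m}(\Omega)}\le C\|f\|_{\mathcal{H}^{0,m-1}(\p\Omega)}$ uniformly in $\lambda$.

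Interpolating between these two bounds gives
\[
\|w\|_{H^{2m-\varepsilon}(\Omega)}\le C\|w\|_{L^2(\Omega)}^{\varepsilon/(2m)}\|w\|_{H^{2m}(\Omega)}^{1-\varepsilon/(2m)}\le C|\lambda|^{-\varepsilon/(2m)}\|f\|_{\mathcal{H}^{0,m-1}(\p\Omega)},
\]
and the boundedness of $\p_\nu^j:H^{2m-\varepsilon}(\Omega)\to H^{2m-\varepsilon-j-1/2}(\p\Omega)$ for $j=m,\dots,2m-1$ (valid for $\varepsilon<1/2$) transfers this to $\|\tilde\gamma w\|_{\mathcal{H}^{m,2m-1,\varepsilon}(\p\Omega)}\le C|\lambda|^{-\varepsilon/(2m)}\|f\|_{\mathcal{H}^{0,m-1}(\p\Omega)}$. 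Taking the supremum over unit $f$ yields the claimed decay. The step requiring the most care is the uniform $L^2$ bound on $u_{q_j,f}(\lambda)$, where the linear $|\lambda|$-growth in $\|(P+q_j-\lambda)u_0\|_{L^2}$ must be balanced exactly by the resolvent decay; once that balance is established, the rest is a routine interpolation and trace computation.
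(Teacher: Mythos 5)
Your proposal is correct and follows essentially the same route as the paper: the same difference $w=u_{q_1,f}-u_{q_2,f}$, a uniform $L^2$ bound on $u_{q_2,f}(\lambda)$ via a $\lambda$-independent lifting plus the $O(|\lambda|^{-1})$ resolvent bound, a uniform $H^{2m}$ bound from the equation, and interpolation followed by the trace estimate. The only cosmetic differences are that the paper derives the resolvent decay from the quadratic-form inequality $((P+q_j-\lambda)u,u)\ge\tfrac{|\lambda|}{2}\|u\|^2$ rather than quoting the self-adjoint resolvent bound, and uses the solution of $(P+\tilde\lambda)v_0=0$, $\gamma v_0=f$ as the lifting.
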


\begin{proof} Let $u_{q_j,f}(\lambda)\in H^{2m}(\Omega)$ be a solution to \eqref{eq_bvp}, $j=1,2$. Then $u=u_{q_1,f}(\lambda)-u_{q_2,f}(\lambda)$ solves the problem
\begin{equation}
\label{eq_lim_6}
\begin{aligned}
(P+q_1-\lambda)u&=(q_2-q_1)u_{q_2,f}(\lambda),\quad \textrm{in}\quad \Omega\\
\gamma u&=0, \quad \textrm{on}\quad \p\Omega.
\end{aligned}
\end{equation}
Thus, $u\in \mathcal{D}(P)$, and therefore, 
\[
(Pu,u)_{L^2(\Omega)}\ge -C\|u\|_{L^2(\Omega)}^2,\quad C\ge 0.
\]
Assume that $\lambda<0$ and $|\lambda|$ is so large that
\[
((P+q_j-\lambda)u,u)_{L^2(\Omega)}\ge \frac{|\lambda|}{2}\|u\|_{L^2(\Omega)}^2,\quad j=1,2. 
\]
Then
\begin{equation}
\label{eq_lim_1}
\begin{aligned}
\frac{|\lambda|}{2}\|u\|_{L^2(\Omega)}^2&\le ((q_2-q_1)u_{q_2,f}(\lambda),u)_{L^2(\Omega)}\\
&\le \|q_2-q_1\|_{L^\infty(\Omega)}\|u_{q_2,f}(\lambda)\|_{L^2(\Omega)}\|u\|_{L^2(\Omega)}.
\end{aligned}
\end{equation}
This implies that
\begin{equation}
\label{eq_lim_3}
\|u\|_{L^2(\Omega)}\le \frac{C}{|\lambda|}\|u_{q_2,f}(\lambda)\|_{L^2(\Omega)},
\end{equation}
where the constant $C>0$ is independent of $\lambda$.

Let $\tilde \lambda>0$ be large enough but fixed so that zero is not in the spectrum of the operator $P+\tilde\lambda$, equipped with the domain $\mathcal{D}(P)$. Then $u_{q_2,f}(\lambda)=v_0+v_1$, where $v_0$ is a solution to the problem
\begin{equation}
\label{eq_lim_2}
\begin{aligned}
(P+\tilde \lambda)v_0&=0,\quad \textrm{in}\quad \Omega,\\
\gamma v_0&=f, \quad \textrm{on}\quad \p\Omega,
\end{aligned}
\end{equation}
and $v_1$ is a solution to the problem
\begin{align*}
(P+q_2-\lambda)v_1&=(\tilde \lambda +\lambda-q_2)v_0,\quad \textrm{in}\quad \Omega\\
\gamma v_1&=0, \quad \textrm{on}\quad \p\Omega.
\end{align*}
Thus, $v_1\in \mathcal{D}(P)$, and, similarly to \eqref{eq_lim_1},  we get
\[
\frac{|\lambda|}{2}\|v_1\|_{L^2(\Omega)}\le\|\tilde \lambda+\lambda-q_2\|_{L^\infty}\|v_0\|_{L^2(\Omega)}. 
\]
This yields that
\begin{equation}
\label{eq_lim_4}
\|v_1\|_{L^2(\Omega)}\le C \|v_0\|_{L^2(\Omega)},
\end{equation}
with $C>0$ and $C$ does not depend on $\lambda$.  It follows from \eqref{eq_lim_2} that
\begin{equation}
\label{eq_lim_5}
\|v_0\|_{L^2(\Omega)}\le \|v_0\|_{H^{2m}(\Omega)} \le C\|f\|_{\mathcal{H}^{0,m-1}(\p \Omega)},
\end{equation}
where $C>0$ is independent of $\lambda$.  Using \eqref{eq_lim_3}, \eqref{eq_lim_4} and \eqref{eq_lim_5}, we get
\begin{equation}
\label{eq_lim_7}
\|u_{q_2,f}(\lambda)\|_{L^2(\Omega)}\le C\|f\|_{\mathcal{H}^{0,m-1}(\p \Omega)}
\end{equation}
and
\begin{equation}
\label{eq_lim_8}
\|u\|_{L^2(\Omega)}\le\frac{C}{|\lambda|}\|f\|_{\mathcal{H}^{0,m-1}(\p \Omega)},
\end{equation}
where $C>0$ is independent of $\lambda$.

 Let us now proceed to derive an estimate for the norm of $u$ in $H^{2m}(\Omega)$. 
Let $\tilde \lambda>0$ be fixed and large enough so that zero is not in  the spectrum of the operator $P+q_1+\tilde\lambda$, equipped with the domain $\mathcal{D}(P)$. Then \eqref{eq_lim_6} implies that
\begin{align*}
(P+q_1+\tilde \lambda)u&=(q_2-q_1)u_{q_2,f}(\lambda)+(\tilde \lambda+\lambda)u,\quad \textrm{in}\quad \Omega,\\
\gamma u&=0, \quad \textrm{on}\quad \p\Omega,
\end{align*}
and 
\[
\|u\|_{H^{2m}(\Omega)}\le C(|\tilde\lambda|+|\lambda|)\|u\|_{L^2(\Omega)}+\|q_2-q_1\|_{L^\infty(\Omega)}\|u_{q_2,f}(\lambda)\|_{L^2(\Omega)}.
\]
It follows from \eqref{eq_lim_7} and \eqref{eq_lim_8} that
\begin{equation}
\label{eq_lim_9}
\|u\|_{H^{2m}(\Omega)}\le C\|f\|_{\mathcal{H}^{0,m-1}(\p \Omega)},
\end{equation}
where $C>0$ is independent of $\lambda$.

By an interpolation property of the Sobolev norms, see \cite{Grubbbook2009}, we get
\[
\|u\|_{H^s(\Omega)}\le C\|u\|_{L^2(\Omega)}^{1-\frac{s}{2m}}\|u\|_{H^{2m}(\Omega)}^{\frac{s}{2m}}, \quad 0\le s\le 2m.
\]
It follows from \eqref{eq_lim_8} and \eqref{eq_lim_9} that
\[
\|u\|_{H^s(\Omega)}\le \frac{C}{|\lambda|^{1-\frac{s}{2m}}}\|f\|_{\mathcal{H}^{0,m-1}(\p \Omega)}, \quad 0\le s\le 2m,
\]
where $C>0$ is independent of $\lambda$. Thus, 
\[
\|\p_\nu^j u|_{\p \Omega}\|_{H^{2m-\varepsilon-j-1/2}(\p \Omega)}\le C\|u\|_{H^{2m-\varepsilon}(\Omega)}\le  \frac{C}{|\lambda|^{\frac{\varepsilon}{2m}}}\|f\|_{\mathcal{H}^{0,m-1}(\p \Omega)},
\]
where $m\le j\le 2m-1$ and $\varepsilon >0$ small.  Therefore,
\[
\|\Lambda_{q_1}(\lambda)f-\Lambda_{q_2}(\lambda)(f)\|_{\mathcal{H}^{m,2m-1,\varepsilon}(\p\Omega)}\le  \frac{C}{|\lambda|^{\frac{\varepsilon}{2m}}}\|f\|_{\mathcal{H}^{0,m-1}(\p \Omega)}.
\]
The claim follows.

\end{proof}

In the proof of the following proposition, we shall need some basic facts concerning the resolvent of the self-adjoint operator $P+q_j$. 
Let $\rho(P+q_j)\subset \C$ be the resolvent set of $P+q_j$.  Notice that if $\lambda_0>0$ is large enough, then 
\begin{equation}
\label{eq_rho_1}
\rho(P+q_j)\supset\{\lambda\in \R:\lambda\le -\lambda_0\}.
\end{equation} 
The resolvent
\[
R_{q_j}(\lambda):=(P+q_j-\lambda)^{-1}: \rho(P+q_j)\to \mathcal{L}(L^2(\Omega),\mathcal{D}(P))
\]
is holomorphic. Furthermore, for any $h\in L^2(\Omega)$ and any $\lambda\in \rho(P+q_j)$, we have
\[
R_{q_j}(\lambda)h=\sum_{k\ge 1}\frac{1}{\lambda_{k,q_j}-\lambda}(h,\varphi_{k,q_j})_{L^2(\Omega)}\varphi_{k,q_j},
\]
where the series converges in $L^2(\Omega)$.  

\begin{prop}
\label{prop_deriv_DN_map} Assume that the hypotheses of Theorem \emph{\ref{thm_main}} hold. Then for each $f\in \mathcal{H}^{0,m-1}(\p \Omega)$, the function $\lambda\mapsto \Lambda_{q_j}(\lambda)f$, $j=1,2$, is holomorphic in the region $\Re \lambda<0$, $|\Re \lambda|$ large enough, with values in $\mathcal{H}^{m,2m-1}(\p \Omega)$. 
Moreover, 
for all $l\in\N$ satisfying $(l-1)m>n$ and all $\lambda<0$, $|\lambda|$ large enough, we have
\[
\frac{d^l}{d\lambda^l}(\Lambda_{q_1}(\lambda)f-\Lambda_{q_2}(\lambda)f)=0, \quad \forall f\in \mathcal{H}^{0,m-1}(\p \Omega).
\]
\end{prop}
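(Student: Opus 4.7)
The plan is twofold: first establish holomorphy of $\Lambda_{q_j}(\lambda)f$ via a resolvent decomposition, then derive an explicit Fourier-series representation of $\frac{d^l}{d\lambda^l}\Lambda_{q_j}(\lambda)f$ whose terms depend only on the spectral data $\lambda_{k,q_j}$ and $\tilde\gamma\varphi_{k,q_j}$. For the holomorphy, fix $\tilde\lambda>0$ large enough that $-\tilde\lambda\in\rho(P+q_j)$, and let $v_0\in H^{2m}(\Omega)$ be the ($\lambda$-independent) solution of $(P+\tilde\lambda)v_0=0$, $\gamma v_0=f$, as in the proof of Proposition \ref{prop_norm_dirichlet-to-neumann}. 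A direct check gives $u_{q_j,f}(\lambda)=v_0+R_{q_j}(\lambda)\bigl[(\tilde\lambda+\lambda-q_j)v_0\bigr]$. Since $R_{q_j}(\lambda)$ is holomorphic from $\rho(P+q_j)$ into $\mathcal{L}(L^2(\Omega),\mathcal{D}(P))$ (with the graph norm on $\mathcal{D}(P)$ equivalent to $\|\cdot\|_{H^{2m}(\Omega)}$), and $\rho(P+q_j)$ is open and contains a complex neighborhood of $\{\Re\lambda\le -\lambda_0\}$, the map $\lambda\mapsto u_{q_j,f}(\lambda)\in H^{2m}(\Omega)$ is holomorphic; composing with the bounded trace $\tilde\gamma$ yields the desired holomorphy of $\Lambda_{q_j}(\lambda)f$ into $\mathcal{H}^{m,2m-1}(\partial\Omega)$.

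For the derivative identity, I would compute the Fourier coefficients of $u_{q_j,f}(\lambda)$ in the orthonormal basis $\{\varphi_{k,q_j}\}$ of $L^2(\Omega)$ directly. Applying Green's formula \eqref{eq_green} with $u=u_{q_j,f}(\lambda)$ and $v=\varphi_{k,q_j}$, and using that $\gamma\varphi_{k,q_j}=0$ kills the boundary contributions indexed by $j'\le m-1$, one finds
\[
(\lambda_{k,q_j}-\lambda)(u_{q_j,f}(\lambda),\varphi_{k,q_j})_{L^2(\Omega)}=-B(f,\tilde\gamma\varphi_{k,q_j}),
\]
where $B(f,\tilde\gamma\varphi_{k,q_j})=\sum_{j'=m}^{2m-1}\int_{\partial\Omega}N_{2m-1-j'}(u_{q_j,f}(\lambda))\,\overline{\partial_\nu^{j'}\varphi_{k,q_j}}\,dS$ depends on $u_{q_j,f}(\lambda)$ only through $\gamma u_{q_j,f}(\lambda)=f$, since $N_{2m-1-j'}$ has order $\le m-1$ whenever $j'\ge m$. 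Parseval then yields the $L^2(\Omega)$-convergent expansion
\[
u_{q_j,f}(\lambda)=-\sum_{k\ge 1}\frac{B(f,\tilde\gamma\varphi_{k,q_j})}{\lambda_{k,q_j}-\lambda}\,\varphi_{k,q_j}.
\]

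Differentiating the Fourier coefficients $l$ times produces the formal series $-\sum_{k\ge 1}l!\,B(f,\tilde\gamma\varphi_{k,q_j})(\lambda_{k,q_j}-\lambda)^{-(l+1)}\varphi_{k,q_j}$. Using Weyl's law $\lambda_{k,q_j}\sim k^{2m/n}$, the elliptic bound $\|\varphi_{k,q_j}\|_{H^{2m}(\Omega)}\le C(\lambda_{k,q_j}+1)$ (from $P\varphi_{k,q_j}=(\lambda_{k,q_j}-q_j)\varphi_{k,q_j}$), and a trace/interpolation estimate of the form $|B(f,\tilde\gamma\varphi_{k,q_j})|\le C\|f\|_{\mathcal{H}^{0,m-1}(\partial\Omega)}(\lambda_{k,q_j}+1)$, this series converges absolutely in $H^{2m}(\Omega)$ for $|\lambda|$ large whenever $(l-1)m>n$. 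Passing the continuous map $\tilde\gamma$ through the sum and invoking the hypotheses $\lambda_{k,q_1}=\lambda_{k,q_2}$ and $\tilde\gamma\varphi_{k,q_1}=\tilde\gamma\varphi_{k,q_2}$ of Theorem \ref{thm_main} — which force $B(f,\tilde\gamma\varphi_{k,q_1})=B(f,\tilde\gamma\varphi_{k,q_2})$ — makes the two series for $j=1,2$ identical term-by-term, so the claimed vanishing of $\frac{d^l}{d\lambda^l}(\Lambda_{q_1}(\lambda)f-\Lambda_{q_2}(\lambda)f)$ follows. The principal obstacle is precisely this convergence step: one needs eigenfunction and normal-trace bounds sharp enough, combined with Weyl asymptotics, to justify both termwise differentiation in $\lambda$ and termwise application of $\tilde\gamma$ above the threshold $(l-1)m>n$.
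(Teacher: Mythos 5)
Your proposal is correct and follows essentially the same route as the paper: the same resolvent decomposition $u_{q_j,f}(\lambda)=F-R_{q_j}(\lambda)(q_j-\lambda-\tilde\lambda)F$ for holomorphy, a spectral expansion of $\frac{d^l}{d\lambda^l}u_{q_j,f}(\lambda)$ whose coefficients are boundary pairings determined by $f$ and $\tilde\gamma\varphi_{k,q_j}$ alone, and $H^{2m}(\Omega)$-convergence for $(l-1)m>n$ via Weyl asymptotics together with $\|\varphi_{k,q_j}\|_{H^{2m}(\Omega)}\le C(\lambda_{k,q_j}+1)$, so that $\tilde\gamma$ may be applied termwise. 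The only (harmless) variations are that you obtain the Fourier coefficients by pairing $u_{q_j,f}(\lambda)$ directly with $\varphi_{k,q_j}$ in Green's formula and keeping the boundary terms of index $\ge m$, whereas the paper first writes $\frac{d^l}{d\lambda^l}u_{q_j,f}(\lambda)=l!\,R_{q_j}(\lambda)^l u_{q_j,f}(\lambda)$ and applies Green's formula to $(F,(P+\tilde\lambda)\varphi_{k,q_j})_{L^2(\Omega)}$, keeping the complementary terms $i\le m-1$; the two coefficient formulas coincide, and both are unchanged under the hypotheses of Theorem \ref{thm_main}.
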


\begin{proof}

Assume, as we may, that $\lambda_0$ in the hypothesis (A1) is such that the inclusion \eqref{eq_rho_1} holds. Then for $f\in \mathcal{H}^{0,m-1}(\p \Omega)$ and $\lambda<-\lambda_0$, consider a solution $u_{q_j,f}(\lambda)\in H^{2m}(\Omega)$ to the problem \eqref{eq_bvp}.

Let  $\tilde \lambda>0$ be large enough but fixed such that zero is not in the spectrum $P+\tilde \lambda$, equipped with the domain $\mathcal{D}(P)$.   We have
\begin{equation}
\label{eq_der_1}
u_{q_j,f}(\lambda)=F-R_{q_j}(\lambda)(q_j-\lambda-\tilde \lambda)F,
\end{equation}
where $F$ is a solution to 
\begin{align*}
(P+\tilde \lambda)F&=0,\quad \textrm{in}\quad \Omega,\\
\gamma F&=f, \quad \textrm{on}\quad \p\Omega.
\end{align*}
It follows from  \eqref{eq_der_1} that  $u_{q_j,f}(\lambda)$ is a holomorphic function of $\lambda\in\rho(P+q_j)$ with values in $H^{2m}(\Omega)$. 
Differentiating the problem \eqref{eq_bvp} with respect to $\lambda$, $l$ times, we get
\begin{align*}
(P+q_{j}- \lambda)\frac{d^l}{d\lambda^l}u_{q_j,f}(\lambda)&=l\frac{d^{l-1}}{d\lambda^{l-1}}u_{q_j,f}(\lambda),\quad \textrm{in}\quad \Omega,\\
\gamma (\frac{d^l}{d\lambda^l}u_{q_j,f}(\lambda))&=0, \quad \textrm{on}\quad \p\Omega.
\end{align*}
Hence, using \eqref{eq_der_1}, we have
\begin{align*}
\frac{d^l}{d\lambda^l}u_{q_j,f}(\lambda)&=l R_{q_j}(\lambda)\frac{d^{l-1}}{d\lambda^{l-1}}u_{q_j,f}(\lambda)=\dots=l! R_{q_j}(\lambda)^l u_{q_j,f}(\lambda)\\
&=
l! R_{q_j}(\lambda)^lF-l!R_{q_j}(\lambda)^{l+1}(q_j-\lambda-\tilde \lambda)F.
\end{align*}
Here
\[
R_{q_j}(\lambda)^{l}F=
\sum_{k\ge 1}\frac{1}{(\lambda_{k,q_j}-\lambda)^{l}}(F,\varphi_{k,q_j})_{L^2(\Omega)}\varphi_{k,q_j}
\]
and
\[
R_{q_j}(\lambda)^{l+1}(q_j-\lambda-\tilde \lambda)F=
\sum_{k\ge 1}\frac{1}{(\lambda_{k,q_j}-\lambda)^{l+1}}(F,(q_j-\lambda-\tilde \lambda)\varphi_{k,q_j})_{L^2(\Omega)}\varphi_{k,q_j}.
\]
Thus,
\[
\frac{d^l}{d\lambda^l}u_{q_j,f}(\lambda)=-l!\sum_{k\ge 1}\frac{1}{(\lambda_{k,q_j}-\lambda)^{l+1}}(F,(q_j-\lambda_{k,q_j}-\tilde \lambda)\varphi_{k,q_j})_{L^2(\Omega)}\varphi_{k,q_j}
\]
with the convergence in $L^2(\Omega)$. 
Now using the generalization of Green's formula \eqref{eq_green}, we obtain 
\begin{align*}
(F,(q_j-\lambda_{k,q_j}-\tilde \lambda)\varphi_{k,q_j})_{L^2(\Omega)}&=-(F,(P+\tilde \lambda)\varphi_{k,q_j})_{L^2(\Omega)}
\\
&=-\sum_{i=0}^{2m-1}\int_{\p\Omega} \overline{N_{2m-1-i}(\varphi_{k,q_j})}\p_\nu^i FdS,
\end{align*}
where $N_{2m-1-i}$ is a linear differential operator of order $2m-1-i$ in a neighborhood of $\p \Omega$, which contains the term $\p_\nu^{2m-1-i}$ with a non-vanishing coefficient. Since $\p_\nu^i \varphi_{k,q_j}|_{\p \Omega}=0$, $i=0,1,\dots m-1$, we have $N_{2m-1-i}(\varphi_{k,q_j})|_{\p \Omega}=0$ for $i\ge m$. Thus, 
\[
(F,(q_j-\lambda_{k,q_j}-\tilde \lambda)\varphi_{k,q_j})_{L^2(\Omega)}=-\sum_{i=0}^{m-1}\int_{\p\Omega} \overline{N_{2m-1-i}(\varphi_{k,q_j})}f_i dS,
\]
where $f=(f_0,\dots, f_{m-1})=\gamma F$. Hence,
\begin{equation}
\label{eq_lambda_der}
\frac{d^l}{d\lambda^l}u_{q_j,f}(\lambda)=l!\sum_{k\ge 1}\frac{1}{(\lambda_{k,q_j}-\lambda)^{l+1}}
\bigg(\sum_{i=0}^{m-1}\int_{\p\Omega} \overline{N_{2m-1-i}(\varphi_{k,q_j})}f_i dS\bigg)
\varphi_{k,q_j}.
\end{equation}
The series in the right hand side of \eqref{eq_lambda_der} converges in $L^2(\Omega)$ for any $\lambda\le -\lambda_0$. Let us show that it  also converges in $H^{2m}(\Omega)$ for $l$ large enough. Indeed, by the standard consequence of the Weyl law, the eigenvalues of the elliptic operator operator $P+q_j$ of order $m$ have the following asymptotics,
\[
\lambda_{k,q_j}\sim k^{m/n}, \quad k\to +\infty,
\]
in the sense that there exist constant $C_1,C_2>0$ such that $C_1 k^{m/n}\le \lambda_{k,q_j}\le C_2 k^{m/n}$, for all $k=1,2,\dots$,
see \cite{Safarov_Vassiliev_book_1997}. Hence, for large $k$, using that $\|\varphi_{k,q_j}
\|_{H^{2m}(\Omega)}\le C\lambda_k$,  we get
\begin{align*}
\|\frac{1}{(\lambda_{k,q_j}-\lambda)^{l+1}}(F,(q_j-\lambda_{k,q_j}-\tilde \lambda)\varphi_{k,q_j})_{L^2(\Omega)}\varphi_{k,q_j}
\|_{H^{2m}(\Omega)}\\
\le C k^{-\frac{(l+1)m}{n}}\|F\|_{L^2(\Omega)}\|q_j-\lambda_{k,q_j}-\tilde \lambda\|_{L^\infty(\Omega)}\|\varphi_{k,q_j}
\|_{H^{2m}(\Omega)} \le C k^{\frac{(1-l)m}{n}}.
\end{align*}

Therefore, the series in the right hand side of \eqref{eq_lambda_der} converges in $H^{2m}(\Omega)$ for $l$ satisfying $(l-1)m>n$.  

Furthermore, for all $l\in\N$ satisfying $(l-1)m>n$ and $r=m,\dots,2m-1$, we have
\[
\p_\nu^r\frac{d^l}{d\lambda^l}u_{q_j,f}(\lambda)=l!\sum_{k\ge 1}\frac{1}{(\lambda_{k,q_j}-\lambda)^{l+1}}
\bigg(\sum_{i=0}^{m-1}\int_{\p\Omega} \overline{N_{2m-1-i}(\varphi_{k,q_j})}f_i dS\bigg)
\p_\nu^r\varphi_{k,q_j}.
\]
From this and from the assumption  $\tilde \gamma \varphi_{k,q_1}=\tilde \gamma\varphi_{k,q_2}$, $k=1,2,\dots$, it  then follows
 for all $l\in\N$ satisfying $(l-1)m>n$ that 
\[
\frac{d^l}{d\lambda^l}(\Lambda_{q_1}(\lambda)f-\Lambda_{q_2}(\lambda)f)=0, \quad \forall f\in \mathcal{H}^{0,m-1}(\p \Omega).
\]
This completes the proof. 
\end{proof}

It is now easy to obtain the statement of Theorem \ref{thm_main}.
It follows from Proposition \ref{prop_deriv_DN_map} that $\Lambda_{q_1}(\lambda)-\Lambda_{q_2}(\lambda)$ is a polynomial in $\lambda$. Combining this with Proposition \ref{prop_norm_dirichlet-to-neumann}, we conclude that $\Lambda_{q_1}(\lambda)=\Lambda_{q_2}(\lambda)$ for all $\lambda<0$ with $|\lambda|$ large enough. Proposition \ref{prop_q_1_q2} implies that $q_1=q_2$. The proof is complete.

\section{Applications} 

\label{sec_applications}

\subsection{The Laplace operator}
Let us check that the conditions (A1) and (A2) are satisfied for $P=-\Delta$ in $\R^n$, $n\ge 2$. 
Let $\xi\in\R^n$. Then due to the rotational invariance, we may assume that $\xi=(|\xi|,0,\dots,0)$. Let $\lambda<0$ and consider
\[
\zeta_1=(\frac{|\xi|}{2},0,\dots,0)+i(0,\sqrt{\frac{|\xi|^2}{4}+|\lambda|},0,\dots,0)\in \C^n,\quad 
\zeta_2=-\zeta_1.
\]
Hence, $\xi=\zeta_1-\overline{\zeta_2}$, and $\zeta_j\cdot\zeta_j=\lambda$, for $\lambda<0$, $j=1,2$. 

To check the condition (A2), let
\[
L_\zeta(\xi)=(\xi+\zeta)\cdot (\xi+\zeta)-\zeta\cdot\zeta=\xi\cdot\xi+2\xi\cdot\zeta,\quad \xi\in\R^n,\quad \zeta\cdot\zeta=\lambda. 
\]
Then
\[
\p_{\xi_i}L_\zeta(\xi)=2\xi_i+2\zeta_i, \quad \xi\in\R^n,\quad \zeta\cdot\zeta=\lambda,\quad i=1,\dots,n.
\]
The condition $\zeta\cdot\zeta=\lambda<0$ is equivalent to the fact that
\[
\Re\zeta\cdot\Re\zeta-\Im\zeta\cdot\Im\zeta=\lambda,\quad 
\Re\zeta\cdot\Im\zeta=0.
\]
Hence, $|\Re\zeta|^2+|\lambda|=|\Im \zeta|^2$, and therefore, $|\Im\zeta|\ge \sqrt{|\lambda|}$. 
We get
\[
\tilde L_\zeta(\xi)\ge 
\bigg(\sum_{i=1}^n|\p_{\xi_i} L_\zeta(\xi)|^2\bigg)^{1/2}
 \ge 2 \bigg(\sum_{i=1}^n|\Im \zeta_i|^2\bigg)^{1/2}=2|\Im \zeta|Ê\ge 2  \sqrt{|\lambda|},
\]
and thus, the assumption (A2) holds. 

Applying Theorem \ref{thm_main} to the Laplace operator, we recover the standard multidimensional Borg-Levinson Theorem due to \cite{Nachman_Sylveser_Uhlmann_88} and \cite{Nov_1988}, see also 
 \cite{Pai_Ser_2002} for the case of singular potentials.

\subsection{The polyharmonic operator} Generalizing the previous considerations, let us  consider the polyharmonic operator $P=(-\Delta)^m$, $m\ge 2$, in $\R^n$, $n\ge 2$, and show that the conditions (A1) and (A2) are satisfied for this operator.   

To check condition (A1) let us notice that as $\lambda<0$, the fact that $(\zeta\cdot\zeta)^m=\lambda$ is equivalent to the fact that there is an integer $k$, $0\le k\le m-1$, such that 
\begin{equation}
\label{eq_polyharmonic_zeta_1}
|\Re \zeta|^2 -|\Im \zeta|^2=|\lambda|^{1/m}\cos\bigg(\frac{\pi+2\pi k}{m}\bigg),
\end{equation}
\begin{equation}
\label{eq_polyharmonic_zeta_2}
2\Re\zeta \cdot \Im\zeta=|\lambda|^{1/m}\sin\bigg(\frac{\pi+2\pi k}{m}\bigg).
\end{equation}

Let $\xi\in \R^n$, $|\xi|<1$,  be an arbitrary vector. Assuming, as we may,  that $\xi=(|\xi|,0,\dots,0)$, and using \eqref{eq_polyharmonic_zeta_1} and \eqref{eq_polyharmonic_zeta_2}, one can easily see that for $|\lambda|$ large enough, the vectors
\begin{equation}
\label{eq_vectors_poly}
\begin{aligned}
\zeta_1&=\bigg(\frac{|\xi|}{2},\alpha(|\xi|,\lambda),0,\dots,0\bigg)+i(0,\beta(|\xi|,\lambda),0,\dots,0)\in\C^n,\\
\zeta_2&=\bigg(-\frac{|\xi|}{2},\alpha(|\xi|,\lambda),0,\dots,0\bigg)-i(0,\beta(|\xi|,\lambda),0,\dots,0)\in\C^n,
\end{aligned} 
\end{equation}
where
\begin{equation}
\label{eq_vectors_poly_alpha}
\begin{aligned}
\alpha(|\xi|,\lambda)=\frac{1}{\sqrt 2}\bigg(\sqrt{|\lambda|^{2/m}-\frac{1}{2}|\lambda|^{1/m}|\xi|^2\cos\frac{\pi}{m}+\frac{|\xi|^4}{16}}+|\lambda|^{1/m}\cos\frac{\pi}{m}-\frac{|\xi|^2}{4}\bigg)^{1/2},\\
\beta(|\xi|,\lambda)=\frac{1}{\sqrt 2}\bigg(\sqrt{|\lambda|^{2/m}-\frac{1}{2}|\lambda|^{1/m}|\xi|^2\cos\frac{\pi}{m}+\frac{|\xi|^4}{16}}-|\lambda|^{1/m}\cos\frac{\pi}{m}+\frac{|\xi|^2}{4}\bigg)^{1/2},
\end{aligned}
\end{equation}
satisfy $\xi=\zeta_1-\overline{\zeta_2}$ and $P(\zeta_j)=(\zeta_j\cdot \zeta_j)^m=\lambda$, $j=1,2$.

Let us now check the condition (A2).  First we shall show that there exists a constant $C\ge 1$ such that 
\begin{equation}
\label{eq_polyharmonic_zeta_main}
|\Im \zeta|\ge |\lambda|^{1/(2m)}/C,\quad \textrm{for }\zeta\in P^{-1}(\lambda). 
\end{equation}
In  the case when $k$ is such that $\sin(\frac{\pi+2\pi k}{m})=0$,  we have $\cos(\frac{\pi+2\pi k}{m})=-1$, since $\lambda<0$. Thus, \eqref{eq_polyharmonic_zeta_1} implies that $|\Im \zeta|\ge |\lambda|^{1/(2m)}$. 

Consider now the case when $k$ is such that $\sin(\frac{\pi+2\pi k}{m})\ne 0$.  It follows then  from \eqref{eq_polyharmonic_zeta_2}, by an application of the Cauchy--Schwarz inequality,  that 
\begin{equation}
\label{eq_polyharmonic_zeta_3}
|\Re \zeta||\Im \zeta|\ge |\lambda|^{1/m}/C. 
\end{equation}
If $\cos(\frac{\pi+2\pi k}{m})\le 0$, \eqref{eq_polyharmonic_zeta_1} and \eqref{eq_polyharmonic_zeta_3} imply \eqref{eq_polyharmonic_zeta_main}. Assume finally that $\cos(\frac{\pi+2\pi k}{m})> 0$. It follows from \eqref{eq_polyharmonic_zeta_1} and \eqref{eq_polyharmonic_zeta_3} that 
\[
\sqrt{|\Im\zeta|^2 + |\lambda|^{1/m}\cos \bigg(\frac{\pi+2\pi k}{m}\bigg)}|\Im\zeta|\ge |\lambda|^{1/m}/C.
\]
An elementary analysis of this inequality allows us to conclude that the estimate \eqref{eq_polyharmonic_zeta_main} is valid  also in this case.

Let 
\[
L_\zeta(\xi)=((\xi+\zeta)\cdot (\xi+\zeta))^m-\lambda, \quad \xi\in \R^n,\quad (\zeta\cdot \zeta)^m=\lambda<0.
\]
Since the only term in $L_\zeta(\xi)$, which contributes to the derivative $\p^{2m-1}_{\xi_i}L_\zeta(\xi)$, is $(\xi_i+\zeta_i)^{2m}$, we have
\[
\p^{2m-1}_{\xi_i} L_\zeta(\xi)=(2m)!(\xi_i+\zeta_i), \quad i=1,\dots,n. 
\]
Hence, using \eqref{eq_polyharmonic_zeta_main}, we get
\[
\tilde L_\zeta(\xi)\ge \bigg( \sum_{i=1}^n |\p^{2m-1}_{\xi_i} L_\zeta(\xi)|^2\bigg)^{1/2}\ge (2m)!|\Im \zeta|\ge  |\lambda|^{1/(2m)}/C,
\]
which shows that the assumption (A2) holds for the polyharmonic operator. 

We have the following corollary of Theorem \ref{thm_main}.

\begin{cor}
Let $q_1,q_2\in L^\infty(\Omega)$ be real-valued and $\varphi_{k,q_1}$ be an orthonormal basis in $L^2(\Omega)$ of the Dirichlet eigenfunctions of $(-\Delta)^m+q_1$. Furthermore, assume that  the Dirichlet eigenvalues $\lambda_{k,q_j}$ of $(-\Delta)^m+q_j$ satisfy
$
\lambda_{k,q_1}=\lambda_{k,q_2}$,  $k=1,2,\dots$,
and that there exists an orthonormal basis in $L^2(\Omega)$ of the Dirichlet eigenfunctions $\varphi_{k,q_2}$ of $(-\Delta)^m+q_2$ such that
\[
\tilde \gamma\varphi_{k,q_1}=\tilde \gamma\varphi_{k,q_2},\quad k=1,2,\dots. 
\]
Then $q_1=q_2$. 
\end{cor}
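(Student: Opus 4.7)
The plan is to obtain this corollary as an immediate consequence of Theorem \ref{thm_main} applied to $P = (-\Delta)^m$. The only task is therefore to verify that the hypotheses (A1) and (A2) are fulfilled for the polyharmonic operator, after which no further argument is needed.

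For (A1), I would first translate the constraint $(\zeta\cdot\zeta)^m=\lambda$ with $\lambda<0$ into the angular form $\zeta\cdot\zeta=|\lambda|^{1/m}e^{i(\pi+2\pi k)/m}$ for some $k\in\{0,\dots,m-1\}$, which in real/imaginary parts becomes the coupled system \eqref{eq_polyharmonic_zeta_1}--\eqref{eq_polyharmonic_zeta_2}. Then, given an arbitrary $\xi$ in the open unit ball $U=\{|\xi|<1\}\subset\R^n$, rotate coordinates so that $\xi=(|\xi|,0,\dots,0)$ and look for $\zeta_j$ living in the $(x_1,x_2)$--plane of the form \eqref{eq_vectors_poly}; the functions $\alpha,\beta$ in \eqref{eq_vectors_poly_alpha} are determined by imposing that $\zeta_j\cdot\zeta_j$ realize the angle $k=0$ branch for $|\lambda|$ sufficiently large, and the symmetry of the Ansatz guarantees $\zeta_1-\overline{\zeta_2}=\xi$.

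For (A2), I would establish the lower bound $|\Im\zeta|\ge |\lambda|^{1/(2m)}/C$ uniformly over $\zeta\in P^{-1}(\lambda)$. This splits into cases depending on $\sin((\pi+2\pi k)/m)$: when it vanishes one has $\cos((\pi+2\pi k)/m)=-1$ and the bound is immediate from \eqref{eq_polyharmonic_zeta_1}; otherwise Cauchy--Schwarz applied to \eqref{eq_polyharmonic_zeta_2} yields $|\Re\zeta||\Im\zeta|\ge|\lambda|^{1/m}/C$, and combining with \eqref{eq_polyharmonic_zeta_1} handles the cases $\cos((\pi+2\pi k)/m)\le 0$ and $\cos((\pi+2\pi k)/m)>0$ by elementary algebra. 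Once this lower bound is in hand, observing that $\partial^{2m-1}_{\xi_i}L_\zeta(\xi)=(2m)!(\xi_i+\zeta_i)$ (since this is the only contribution from $(\xi+\zeta)\cdot(\xi+\zeta))^m$ to a derivative of order $2m-1$), one gets
\[
\tilde L_\zeta(\xi)\ge \Bigl(\sum_{i=1}^n|\partial^{2m-1}_{\xi_i}L_\zeta(\xi)|^2\Bigr)^{1/2}\ge (2m)!\,|\Im\zeta|\ge |\lambda|^{1/(2m)}/C',
\]
from which (A2) follows.

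I do not expect a genuine obstacle here, since the real work is the verification of (A1)--(A2), which has been carried out in full in the preceding paragraphs of Section \ref{sec_applications}. The closing step is a one-line invocation: the hypotheses of Theorem \ref{thm_main} hold for $P=(-\Delta)^m$, and the assumed equalities of eigenvalues $\lambda_{k,q_1}=\lambda_{k,q_2}$ and of Neumann traces $\tilde\gamma\varphi_{k,q_1}=\tilde\gamma\varphi_{k,q_2}$ for all $k$ are precisely \eqref{eq_spec_d_1}--\eqref{eq_spec_d_2}, so Theorem \ref{thm_main} yields $q_1=q_2$.
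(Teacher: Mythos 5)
Your proposal is correct and follows essentially the same route as the paper: the corollary is obtained by verifying (A1) and (A2) for $P=(-\Delta)^m$ exactly as in Section \ref{sec_applications} (the explicit vectors \eqref{eq_vectors_poly}--\eqref{eq_vectors_poly_alpha} for (A1), the lower bound $|\Im\zeta|\ge|\lambda|^{1/(2m)}/C$ via the case analysis on $\sin$ and $\cos$ of $(\pi+2\pi k)/m$ together with $\partial^{2m-1}_{\xi_i}L_\zeta(\xi)=(2m)!(\xi_i+\zeta_i)$ for (A2)), and then invoking Theorem \ref{thm_main}. No discrepancies to report.
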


\section{The case of incomplete spectral data}

In this section we study the problem of determining the potential from high frequency spectral data. To be precise, let  $\Omega\subset\R^n$, $n\ge 2$, be a bounded smooth domain.
Let $q_1,q_2\in L^\infty(\Omega)$ be real-valued and let $\varphi_{k,q_1}$ be an orthonormal basis in $L^2(\Omega)$
of the Dirichlet eigenfunctions of the operator $P+q_1$.  We assume that there exists an integer $M>0$ such that the eigenvalues $\lambda_{k,q_j}$ of $P+q_j$ satisfy
\begin{equation}
\label{incomplete_lambda}
\lambda_{k,q_1}=\lambda_{k,q_2},\quad \forall k>M,
\end{equation}
and that  there exists an orthonormal basis in $L^2(\Omega)$ of the Dirichlet  eigenfunctions $\varphi_{k,q_2}$ of $P+q_2$ such that
\begin{equation}
\label{incomplete_varphi}
\tilde\gamma\varphi_{k,q_1}=\tilde \gamma\varphi_{k,q_2},\quad  \forall k>M. 
\end{equation}
The problem is whether this still implies that $q_1=q_2$. 
 
Assume that $\lambda_0>0$  is large enough so that for all $\lambda\le -\lambda_0$,  $\lambda\in\rho(P+q_j)$, $j=1,2$, where the self-adjoint operator $P+q_j$ is equipped with the domain $\mathcal{D}(P)$. Let $u_{q_j,f}(\lambda)$ be the solution to the problem 
\begin{align*}
(P+q_j-\lambda)u_{q_j,f}(\lambda)&=0,\quad \textrm{in}\quad \Omega,\\
\gamma u_{q_j,f}(\lambda)&=f,\quad \textrm{on}\quad \p\Omega,
\end{align*}
with $f\in \mathcal{H}^{0,m-1}(\p \Omega)$. 
Setting 
\[
u(\lambda)=u_{q_1,f}(\lambda)-u_{q_2,f}(\lambda),
\]
we get
\begin{equation}
\label{eq_for_green}
\begin{aligned}
(P+q_1-\lambda)u(\lambda)&=(q_1-q_2)u_{q_2,f}(\lambda),\quad \textrm{in}\quad \Omega,\\
\gamma u(\lambda)&=0,\quad \textrm{on}\quad \p\Omega.
\end{aligned}
\end{equation}
Let now $v_1\in H^{2m}(\Omega)$ be such that $(P+q_1-\lambda)v_1=0$ in $\Omega$.
Multiplying \eqref{eq_for_green} by $\overline{v_1}$ and using  Green's formula \eqref{eq_green}, we obtain that
\begin{equation}
\label{eq_after_green_1}
\int_\Omega(q_1-q_2)u_{q_2,f}(\lambda)\overline{v_1}dx=\sum_{i=0}^{2m-1}\int_{\p \Omega} N_{2m-1-i}(u(\lambda))\overline{\p_\nu^i v_1}dS. 
\end{equation}
We would like to choose $v_1$ so that the left hand side of \eqref{eq_after_green_1} vanishes. To this end we are going to require a finite number of orthogonality conditions, to be satisfied by the normal derivatives of $v_1$.

When formulating the orthogonality conditions, notice first that according to \eqref{eq_lambda_der}, we have
\[
\frac{d^l}{d\lambda^l}u_{q_j,f}(\lambda)=l!\sum_{k\ge 1}\frac{1}{(\lambda_{k,q_j}-\lambda)^{l+1}}
\bigg(\sum_{i=0}^{m-1}\int_{\p\Omega} \overline{N_{2m-1-i}(\varphi_{k,q_j})}f_i dS\bigg)
\varphi_{k,q_j},
\]
where $f=(f_0,\dots,f_{m-1})$ and $l$ is large enough, to guarantee the convergence in $H^{2m}(\Omega)$. 
Hence,
\[
\frac{d^l}{d\lambda^l}\p_\nu^r u_{q_j,f}(\lambda)|_{\p\Omega}=l!\sum_{k\ge 1}\frac{1}{(\lambda_{k,q_j}-\lambda)^{l+1}}
\bigg(\sum_{i=0}^{m-1}\int_{\p\Omega} \overline{N_{2m-1-i}(\varphi_{k,q_j})}f_i dS\bigg)
\p_\nu^r\varphi_{k,q_j}|_{\p\Omega},
\]
with convergence in $H^{2m-r-1/2}(\p \Omega)$, $r=m,\dots, 2m-1$.

It follows from \eqref{incomplete_lambda} and \eqref{incomplete_varphi} that
\begin{align*}
\frac{d^l}{d\lambda^l}\p_\nu^r u(\lambda)|_{\p\Omega}&=l!\sum_{k=1}^{M}\frac{1}{(\lambda_{k,q_1}-\lambda)^{l+1}}
\bigg(\sum_{i=0}^{m-1}\int_{\p\Omega} \overline{N_{2m-1-i}(\varphi_{k,q_1})}f_i dS\bigg)
\p_\nu^r\varphi_{k,q_1}|_{\p\Omega}\\
&-l!\sum_{k=1}^{M}\frac{1}{(\lambda_{k,q_2}-\lambda)^{l+1}}
\bigg(\sum_{i=0}^{m-1}\int_{\p\Omega} \overline{N_{2m-1-i}(\varphi_{k,q_2})}f_i dS\bigg)
\p_\nu^r\varphi_{k,q_2}|_{\p\Omega},
\end{align*}
$r=m,\dots, 2m-1$. Integrating $l$ times with respect to $\lambda$, we get
\begin{align*}
\p_\nu^r u(\lambda)|_{\p\Omega}&=\sum_{k=1}^{M}\frac{1}{(\lambda_{k,q_1}-\lambda)}
\bigg(\sum_{i=0}^{m-1}\int_{\p\Omega} \overline{N_{2m-1-i}(\varphi_{k,q_1})}f_i dS\bigg)
\p_\nu^r\varphi_{k,q_1}|_{\p\Omega}\\
&-\sum_{k=1}^{M}\frac{1}{(\lambda_{k,q_2}-\lambda)}
\bigg(\sum_{i=0}^{m-1}\int_{\p\Omega} \overline{N_{2m-1-i}(\varphi_{k,q_2})}f_i dS\bigg)
\p_\nu^r\varphi_{k,q_2}|_{\p\Omega}\\
&+\sum_{k=1}^{l-1}\lambda^k g_{r,k},
\end{align*}
where $g_{r,k}\in H^{2m-r-1/2}(\p \Omega)$, $k=1,\dots, l-1$. Proposition \ref{prop_norm_dirichlet-to-neumann} implies that all $g_{r,k}=0$.
Thus, for any $r=m,\dots, 2m-1$, 
\[
\p_\nu^r u(\lambda)|_{\p\Omega}=\sum_{k=1}^M \alpha_k \p_\nu^r\varphi_{k,q_1}|_{\p\Omega}+\sum_{k=1}^{M}\beta_k \p_\nu^r\varphi_{k,q_2}|_{\p\Omega},
\]
 where the coefficients $\alpha_k$, $\beta_k$ depend on $\lambda$ but do not depend on $r$. 
This together with the fact that $\gamma u(\lambda)=\gamma\varphi_{k,q_j}=0$ implies  that along $\p \Omega$, we have for $ i=0,\dots,m-1$,
\begin{equation}
\label{eq_nor_1}
N_{2m-1-i}(u(\lambda))=\sum_{k=1}^M \alpha_k N_{2m-1-i}(\varphi_{k,q_1}) +\sum_{k=1}^{M}\beta_k N_{2m-1-i}(\varphi_{k,q_2}),
\end{equation}
while for $i=m,\dots,2m-1$,
\begin{equation}
\label{eq_nor_2}
N_{2m-1-i}(u(\lambda))=0.
\end{equation}
It follows from
\eqref{eq_after_green_1}, 
combined with \eqref{eq_nor_1} and \eqref{eq_nor_2}, that in order to have 
\[
\int_\Omega(q_1-q_2)u_{q_2,f}(\lambda)\overline{v_1}dx=0,
\]
we should demand that the $2mM$ orthogonality conditions
\[
\int_{\p \Omega}
 N_{2m-1-i}(\varphi_{k,q_j})\overline{\p_\nu^i v_1}dS=0, \quad k=1,\dots, M,\quad j=1,2,
\]
hold for $i=0,\dots, m-1$.

Thus, in order to conclude that $q_1=q_2$, it suffices to establish that the set
\begin{align*}
\textrm{span}\bigcup_{\lambda<-\lambda_0,\lambda\in\Z}\{
u_{q_1}(\lambda)\overline{u_{q_2}(\lambda)}: u_{q_j}(\lambda)\in H^{2m}(\Omega),(P+q_j-\lambda)u_{q_j}(\lambda)=0 \ \textrm{in}\ \Omega,\\
\int_{\p \Omega}
 N_{2m-1-i}(\varphi_{k,q_j})\overline{\p_\nu^i u_{q_1}(\lambda)}dS=0, k=1,\dots, M, j=1,2, i=0,\dots,m-1 
\}
\end{align*}
is dense in $L^1(\Omega)$, for some $\lambda_0>0$. 

We shall verify the density in the particular case when $P$ is a power of the Laplacian.

\subsection{The Laplace operator} 

In this subsection, we consider the case when $P=-\Delta$, where we recover the following result of  \cite{Isozaki91}.

\begin{thm}
\label{thm_Isozaki}
Let $q_1,q_2\in L^\infty(\Omega)$ be real-valued and $\varphi_{k,q_1}$ be an orthonormal basis in $L^2(\Omega)$
of the Dirichlet eigenfunctions of the operator $-\Delta+q_1$.  Assume that there exists $M>0$ such that the eigenvalues $\lambda_{k,q_j}$ of $-\Delta+q_j$ satisfy
\[
\lambda_{k,q_1}=\lambda_{k,q_2},\quad \forall k>M,
\]
and there exists an orthonormal basis in $L^2(\Omega)$ of the Dirichlet  eigenfunctions $\varphi_{k,q_2}$ of $-\Delta+q_2$ such that
\[
\p_\nu\varphi_{k,q_1}|_{\p\Omega}=\p_\nu\varphi_{k,q_2}|_{\p \Omega},\quad \forall k>M. 
\]
Then $q_1=q_2$.
\end{thm}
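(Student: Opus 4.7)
The strategy is to verify the $L^1(\Omega)$-density assertion at the end of the preceding general discussion, specialized to $P=-\Delta$; once this is done, the earlier machinery of Section 5 delivers $q_1=q_2$ immediately. For $P=-\Delta$ (so $m=1$), the orthogonality conditions on the first factor of the product reduce to the requirement that its Dirichlet trace be $L^2(\p\Omega)$-orthogonal to the finite-dimensional subspace
\[
V := \mathrm{span}\{\p_\nu \varphi_{k,q_j}|_{\p\Omega} : 1 \le k \le M,\ j = 1,2\} \subset L^2(\p\Omega),
\]
of dimension $d \le 2M$.

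To prove the density I would follow the pattern of Proposition \ref{prop_completeness}. Suppose $f \in L^\infty(\Omega)$ is $L^2(\Omega)$-orthogonal to every product in the constrained span; the aim is to show $f = 0$. Fix an arbitrary $\xi \in \R^n$ (here the set $U$ of (A1) is all of $\R^n$). For each integer $\lambda \le -\lambda_0$, choose $\zeta_1,\zeta_2 \in \C^n$ on the complex quadric $\zeta \cdot \zeta = \lambda$ with $\zeta_1 - \overline{\zeta_2} = \xi$, as in Section \ref{sec_applications}, and form the CGO solutions $u_{\lambda,\zeta_j}^{(j)} = e^{i\zeta_j \cdot x}(1 + w_{\lambda,\zeta_j}^{(j)})$ from Proposition \ref{prop_geometric_optics}. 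The new ingredient is to modify $u_{\lambda,\zeta_1}^{(1)}$ so that it also satisfies the $d$ boundary constraints. Exploiting that $\{\zeta \cdot \zeta = \lambda\}$ has real dimension $2n - 2 \ge 2$, I would choose $d$ further nearby perturbations $\zeta_1^{(1)},\dots,\zeta_1^{(d)}$ of $\zeta_1 = \zeta_1^{(0)}$ on the quadric, form the CGO superposition
\[
u^\sharp := \sum_{s=0}^d a_s\, u_{\lambda,\zeta_1^{(s)}}^{(1)},
\]
and solve the $d\times(d+1)$ homogeneous system expressing $u^\sharp|_{\p\Omega} \perp V$, normalized so that $a_0 = 1$.

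Pairing $u^\sharp$ against $\overline{u_{\lambda,\zeta_2}^{(2)}}$ and invoking the vanishing hypothesis on $f$ yields
\[
0 = \sum_{s=0}^d a_s \int_\Omega f\, u_{\lambda,\zeta_1^{(s)}}^{(1)}\, \overline{u_{\lambda,\zeta_2}^{(2)}}\, dx.
\]
The estimate underlying Proposition \ref{prop_completeness} shows that, as $\lambda \to -\infty$, each summand converges to $a_s^\infty \int_\Omega f(x) e^{i(\zeta_1^{(s)} - \overline{\zeta_2})\cdot x}\, dx$. If the perturbations are chosen so that $\zeta_1^{(s)} - \overline{\zeta_2} \to \xi$ along a subsequence and the $a_s$ remain bounded with $a_0 = 1$, passage to the limit forces $\bigl(\sum_s a_s^\infty\bigr)\, \hat f(-\xi) = 0$. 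Since $\xi$ was arbitrary in $\R^n$ and $f$ has compact support, real-analyticity of $\hat f$ then yields $f \equiv 0$.

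The main obstacle is the quantitative control of the linear system determining $(a_s)$: one must choose the perturbations $\zeta_1^{(s)}$ so that the $d\times d$ coefficient matrix, whose entries are boundary pairings of the CGOs against a fixed basis of $V$, is uniformly invertible as $\lambda \to -\infty$, with $a_0 = 1$ not forced into the kernel and $\sum_s a_s^\infty \ne 0$. This reduces to asymptotic estimates for oscillatory boundary integrals of the form $\int_{\p\Omega} e^{i\zeta \cdot x} \psi(x)\, dS(x)$ for $\psi \in V$ and $\zeta$ ranging over the complex quadric, and exploits that $d$ is fixed while the effective dimension of the CGO parameter space is positive; a generic perturbation scheme should produce a uniformly non-degenerate system, yielding the density and hence the theorem.
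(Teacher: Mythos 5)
Your overall architecture (a superposition of CGO solutions with shifted frequency parameters on the quadric $\zeta\cdot\zeta=\lambda$, chosen to annihilate the finitely many boundary constraints, followed by a limit $\lambda\to-\infty$) is the same as the paper's, which proves Theorem \ref{thm_Isozaki} via Proposition \ref{prop_density_cons_laplace}. But your proposal has a genuine gap exactly at the point you yourself flag as ``the main obstacle,'' and that obstacle is not a technicality one can wave away with a ``generic perturbation scheme'': it is the crux. First, normalizing the solution of the homogeneous $d\times(d+1)$ system by $a_0=1$ is not always possible (every nontrivial solution may have $a_0=0$), and even if it is, the remaining $a_s$ need not stay bounded. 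Second, and more seriously, because you arrange all perturbed frequencies so that $\zeta_1^{(s)}-\overline{\zeta_2}\to\xi$, your limit statement is $\bigl(\sum_s a_s^\infty\bigr)\hat f(-\xi)=0$, and nothing prevents $\sum_s a_s^\infty=0$ -- in which case the identity is vacuous. You do not prove the required uniform non-degeneracy of the oscillatory boundary pairings, and proving it would be delicate since the constraints involve the unknown eigenfunctions $\varphi_{k,q_j}$.

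The paper's proof is designed precisely to avoid having to prove any such non-degeneracy. It takes $N+1$ shifts $\eta_l$ chosen so that (i) $\zeta_1+\eta_l$ is \emph{independent of} $\xi$, which forces the dependence coefficients $c_l$ (normalized by $\sum_l|c_l|^2=1$, not by $c_1=1$) to be independent of $\xi$ and to converge along a subsequence to $\tilde c_l$ with $\sum_l|\tilde c_l|^2=1$; and (ii) the limiting phases $e^{i(\xi+\eta_l)\cdot x}\to e^{i\frac{\xi}{2}\cdot x}e^{ilx_1}$ are \emph{distinct} exponentials rather than all collapsing to $e^{i\xi\cdot x}$. The conclusion is then that the Fourier transform of $f\cdot\sum_l\tilde c_l e^{ilx_1}$ vanishes identically, hence $f\cdot\sum_l\tilde c_l e^{ilx_1}=0$; and the exponential polynomial $\sum_l\tilde c_l e^{ilx_1}$ is automatically not identically zero because its frequencies $l=1,\dots,N+1$ are distinct and its coefficients are not all zero. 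No invertibility of any matrix is needed -- only the trivial linear dependence of $N+1$ vectors in $\C^N$. To repair your argument you would need to replace the normalization $a_0=1$ by a unit-norm normalization, make the shifted frequencies $\xi$-independent on the quadric, and separate the limiting frequencies so that the nontriviality of the limit is automatic rather than something to be proved.
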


In view of the discussion in  the beginning of this section, Theorem \ref{thm_Isozaki} is a direct consequence of Proposition \ref{prop_density_cons_laplace} below. We have learned of the idea of  proving  Theorem \ref{thm_Isozaki} in such a way 
 from the paper \cite{Ramm1993}.   Notice that this method is different from the approach, proposed in the paper  \cite{Isozaki91}.

\begin{prop}
\label{prop_density_cons_laplace} 
 Let $h_k\in L^2(\p \Omega)$, $k=1,\dots, N$, with $N$ being arbitrary but fixed. Then there exists $\lambda_0>0$ such that the space
\begin{align*}
S=\emph{\textrm{span}}&\bigcup_{\lambda<-\lambda_0,\lambda\in\Z}\{
u_{q_1}(\lambda)\overline{u_{q_2}(\lambda)}: u_{q_j}(\lambda)\in H^{2}(\Omega),\\
(-\Delta+q_j-\lambda)u_{q_j}(\lambda)=0 \ \textrm{in}\ & \Omega,
j=1,2, 
\int_{\p \Omega}
 u_{q_1}(\lambda)\overline{h_k}dS=0, k=1,\dots, N 
\}
\end{align*}
is dense in $L^1(\Omega)$. 
\end{prop}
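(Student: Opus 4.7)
The plan is to argue by duality. Suppose $f \in L^\infty(\Omega)$ satisfies $\int_\Omega fg\,dx = 0$ for every $g \in S$; we aim to conclude that $f = 0$. Fix an integer $\lambda < -\lambda_0$, and set $V_j(\lambda) := \{u \in H^2(\Omega) : (-\Delta + q_j - \lambda)u = 0\}$ and $V_0(\lambda) := \{u \in V_1(\lambda) : \int_{\partial\Omega} u\,\overline{h_k}\,dS = 0,\ k = 1,\ldots,N\}$. For each $u_{q_1} \in V_0(\lambda)$ and each $u_{q_2} \in V_2(\lambda)$ the hypothesis yields $\int_\Omega f u_{q_1}\overline{u_{q_2}}\,dx = 0$. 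Let $W \in H^2(\Omega)$ solve $(-\Delta + q_2 - \lambda)W = f u_{q_1}$ in $\Omega$ with $W|_{\partial\Omega} = 0$, a well-posed problem for $|\lambda|$ large. Green's identity rewrites the vanishing integral as $\int_{\partial\Omega}\partial_\nu W \cdot \overline{u_{q_2}}\,dS = 0$, and the surjectivity of the Dirichlet trace $V_2(\lambda) \to H^{3/2}(\partial\Omega)$, together with the density of $H^{3/2}$ in $L^2$, forces $\partial_\nu W|_{\partial\Omega} = 0$. The zero-extension of $W$ therefore lies in $H^2(\mathbb R^n)$ with compact support in $\overline\Omega$, solving $(-\Delta + q_2\chi_\Omega - \lambda)W = f u_{q_1}\chi_\Omega$ on all of $\mathbb R^n$.

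For $|\lambda|$ sufficiently large the operator $-\Delta + q_2\chi_\Omega - \lambda$ is invertible on $L^2(\mathbb R^n)$, being a bounded perturbation of the positive $-\Delta - \lambda$, so $W$ is uniquely determined by its source. The compact support of $W$ is a Paley--Wiener condition: in the model case $q_2 \equiv 0$, $\widehat{W}(\xi) = \widehat{f u_{q_1}\chi_\Omega}(\xi)/(|\xi|^2 - \lambda)$ extends to an entire function of exponential type if and only if
\[
\int_\Omega f(x)\,u_{q_1}(x)\,e^{-i\zeta\cdot x}\,dx = 0 \quad \text{for every } \zeta \in \mathbb C^n \text{ with } \zeta\cdot\zeta = \lambda;
\]
for general $q_2$ the same vanishing holds modulo a Neumann-series correction that is negligible for $|\lambda|$ large.

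Given $\xi \in \mathbb R^n$, pick $\zeta_1 \in \mathbb C^n$ with $\zeta_1\cdot\zeta_1 = \lambda$ and $\zeta_1\cdot\xi = |\xi|^2/2$, so that $\zeta := \zeta_1 - \xi$ also lies on the variety. Form the constrained complex geometric optics solution
\[
u_{q_1}(\lambda,\zeta_1) = e^{i\zeta_1\cdot x}\bigl(1 + w_{\lambda,\zeta_1}\bigr) - \sum_{k=1}^N c_k(\lambda,\zeta_1)\,\psi_k(\lambda),
\]
where $w_{\lambda,\zeta_1}$ is the CGO remainder from Proposition \ref{prop_geometric_optics}, $\psi_k(\lambda) \in V_1(\lambda)$ has fixed ($\lambda$-independent) Dirichlet data $g_k \in H^{3/2}(\partial\Omega)$ satisfying $\int_{\partial\Omega} g_\ell\,\overline{h_k}\,dS = \delta_{k\ell}$, and $c_k(\lambda,\zeta_1) = \int_{\partial\Omega} e^{i\zeta_1\cdot x}(1 + w_{\lambda,\zeta_1})\overline{h_k}\,dS$. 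Plugging into the previous display yields
\[
\int_\Omega f(x)\,e^{i\xi\cdot x}(1 + w_{\lambda,\zeta_1})\,dx = \sum_{k=1}^N c_k(\lambda,\zeta_1)\int_\Omega f(x)\,\psi_k(\lambda)\,e^{-i\zeta\cdot x}\,dx,
\]
whose left-hand side tends to $\int_\Omega f(x)\,e^{i\xi\cdot x}\,dx$ as $\lambda \to -\infty$, using $\|w_{\lambda,\zeta_1}\|_{L^2(\Omega)} \to 0$.

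The hard part will be showing that the right-hand side vanishes in the limit, since each $c_k(\lambda,\zeta_1)$ and each integral $\int_\Omega f\psi_k(\lambda) e^{-i\zeta\cdot x}dx$ may separately grow like $e^{C\sqrt{|\lambda|}}$ and only cancel in the sum. Following the idea of \cite{Ramm1993}, the plan is to exploit the remaining freedom in $\zeta_1$: for fixed $\xi$, the admissible $\zeta_1$'s form a real $(2n-4)$-dimensional subvariety of $\{\zeta\cdot\zeta = \lambda\}$ (when $n \ge 3$), on which $\Im\zeta_1$ ranges over a Euclidean sphere in $\xi^\perp$. Averaging the displayed identity against the uniform measure on this sphere sends the right-hand side to zero asymptotically, because the $\psi_k$ are $\zeta_1$-independent while the products $c_k(\lambda,\zeta_1)\,e^{-i\zeta(\zeta_1)\cdot x}$ have mean-zero exponential-oscillatory dependence on $\Im\zeta_1$; the left-hand side retains its limit $\int_\Omega f(x)\,e^{i\xi\cdot x}\,dx$. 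The case $n = 2$ is handled by averaging within the span over several integer values of $\lambda$. In either setting we obtain $\int_\Omega f(x)\,e^{i\xi\cdot x}\,dx = 0$ for every $\xi \in \mathbb R^n$, and the real-analyticity of the Fourier transform of the compactly supported $f$ then forces $f = 0$.
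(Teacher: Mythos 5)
Your reduction to showing $\int_\Omega f(x)e^{i\xi\cdot x}dx=0$ is the right target, but the way you handle the $N$ boundary constraints --- the actual crux of this proposition --- has a genuine gap. You correct a single CGO solution $e^{i\zeta_1\cdot x}(1+w_{\lambda,\zeta_1})$ by subtracting $\sum_k c_k(\lambda,\zeta_1)\psi_k(\lambda)$, and you yourself note that the resulting right-hand side $\sum_k c_k(\lambda,\zeta_1)\int_\Omega f\psi_k(\lambda)e^{-i\zeta\cdot x}dx$ consists of terms that can each grow like $e^{C\sqrt{|\lambda|}}$ (indeed like $e^{2C\sqrt{|\lambda|}}$, since both factors are exponentially large). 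The proposed fix --- averaging over the sphere of admissible $\Im\zeta_1$ --- does not work: the dependence of $c_k(\lambda,\zeta_1)$ and of $e^{-i\zeta(\zeta_1)\cdot x}$ on $\Im\zeta_1$ is through \emph{real} exponentials $e^{\pm\Im\zeta_1\cdot x}$, not oscillations, and averaging $e^{-\Im\zeta_1\cdot x}$ over a sphere of radius $\sim\sqrt{|\lambda|}$ produces a Bessel-type quantity that is still of size $e^{c\sqrt{|\lambda|}}$. The asserted ``mean-zero exponential-oscillatory dependence'' is unsubstantiated and, for the growing part, false; the $n=2$ fallback (``averaging over several integer values of $\lambda$'') is not an argument. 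Separately, your Paley--Wiener detour through the zero-extended $W$ is an unnecessarily roundabout re-derivation of the completeness of products of solutions, and the phrase ``modulo a Neumann-series correction that is negligible'' hides another unproved estimate, though that part is likely repairable.

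The paper avoids the blow-up entirely by a different construction (taken from Ramm): instead of correcting one CGO solution by auxiliary solutions, it builds $N+1$ CGO solutions $u_{q_1,\lambda,\zeta_1+\eta_l}$ whose phases $\zeta_1+\eta_l=(l,0,\dots,0)+i(0,\sqrt{l^2+|\lambda|},0,\dots,0)$ all lie on the characteristic variety and are \emph{independent of} $\xi$. The $N+1$ constraint vectors $H_l\in\C^N$ are automatically linearly dependent, so one gets coefficients $c_l$ with $\sum|c_l|^2=1$ (hence uniformly bounded, no exponential factors) such that $u_{q_1}(\lambda)=\sum_l c_l u_{q_1,\lambda,\zeta_1+\eta_l}$ satisfies all $N$ constraints. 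Pairing with $\overline{u_{q_2,\lambda,\zeta_2}}$ produces phases $e^{i(\xi+\eta_l)\cdot x}\to e^{i\frac{\xi}{2}\cdot x}e^{ilx_1}$, which stay bounded, and passing to the limit gives $f\cdot\sum_l\tilde c_l e^{ilx_1}=0$ with $\sum|\tilde c_l|^2=1$; real-analyticity of the nonzero trigonometric factor then forces $f=0$. If you want to salvage your write-up, replace the single-solution-plus-correction scheme by this linear-dependence argument; as it stands, the key limit on your right-hand side is not established.
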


\begin{proof}
Let  $f\in L^\infty(\Omega)$ be such that
\begin{equation}
\label{eq_orth_lap}
\int_\Omega fgdx=0,\quad \forall g\in S. 
\end{equation}
We will show that this implies that  $f=0$.
Let $\xi\in\R^n$ and $\lambda<0$. As before, we may assume that $\xi=(|\xi|,0,\dots,0)$.  Consider
\[
\zeta_1=(\frac{|\xi|}{2},0,\dots,0)+i(0,\sqrt{\frac{|\xi|^2}{4}+|\lambda|},0,\dots,0)\in \C^n,\quad 
\zeta_2=-\zeta_1.
\]
We have $\xi=\zeta_1-\overline{\zeta_2}$, and $\zeta_j\cdot\zeta_j=\lambda$,  $j=1,2$. 
Set
\begin{equation}
\label{eq_eta}
\eta_l=(-\frac{|\xi|}{2}+l,0,\dots,0)+i(0,-\sqrt{\frac{|\xi|^2}{4}+|\lambda|}+\sqrt{l^2 +|\lambda|},0,\dots,0)\in \C^n,
\end{equation}
$l=1,\dots N+1$. Then
\[
\zeta_1+\eta_l=(l,0,\dots,0)+i(0,\sqrt{l^2 +|\lambda|},0,\dots,0), 
\]
and therefore, $(\zeta_1+\eta_l)\cdot (\zeta_1+\eta_l)=\lambda$. 
By Proposition \ref{prop_geometric_optics}, for $|\lambda|$ large enough,  there are solutions
\[
u_{q_1,\lambda,\zeta_1+\eta_l}=e^{i(\zeta_1+\eta_l)\cdot x}(1+w_{\lambda,\zeta_1+\eta_l})\in H^2(\Omega),\quad l=1,\dots,N+1,
\]
to the equation $(-\Delta+q_1-\lambda)u=0$ in $ \Omega$, 
with $\|w_{\lambda,\zeta_1+\eta_l}\|_{L^2( \Omega)}\to 0$ as $\lambda\to-\infty$, and a solution
\[
u_{q_2,\lambda,\zeta_2}=e^{i\zeta_2\cdot x}(1+w_{\lambda,\zeta_2})\in H^2(\Omega),
\]
to $(-\Delta+q_2-\lambda)u=0$ in $ \Omega$, 
with $\|w_{\lambda,\zeta_2}\|_{L^2(\Omega)}\to 0$ as $\lambda\to-\infty$. 
The $N+1$ vectors
\[
H_l=\begin{pmatrix}
\int_{\p\Omega} u_{q_1,\lambda,\zeta_1+\eta_l}\overline{h_1}ds\\
\vdots\\
\int_{\p\Omega} u_{q_1,\lambda,\zeta_1+\eta_l}\overline{h_{N}}ds
\end{pmatrix}\in \C^{N},\quad l=1,\dots,N+1,
\]
are linearly dependent. Thus, there are constants $c_l=c_l(\lambda,\zeta_1+\eta_l)\in \C$, not all equal to zero, such that
\begin{equation}
\label{eq_constraints}
\sum_{l=1}^{N+1}c_l H_l=0. 
\end{equation}
Notice that $c_l$ are independent of $\xi$. 
We can assume that 
$
\sum_{l=1}^{N+1}|c_l|^2=1$,
and that $c_l\to \tilde c_l$, as $\lambda\to -\infty$, $\lambda\in \Z$, where $\tilde c_l\in\C$ are  such that 
\begin{equation}
\label{eq_sum_constants}
\sum_{l=1}^{N+1}|\tilde c_l|^2=1.
\end{equation} 

Let
\[
u_{q_1}(\lambda)=\sum_{l=1}^{N+1}c_l u_{q_1,\lambda,\zeta_1+\eta_l}\in H^2(\Omega), \quad (-\Delta+q_1-\lambda)u_{q_1}(\lambda)=0 \textrm{ in }\Omega.
\]
Then \eqref{eq_constraints} implies that 
\begin{equation}
\label{eq_orth_lap_2}
\int_{\p\Omega} u_{q_1}(\lambda)\overline{h_k}dS=0,\quad k=1,\dots, N. 
\end{equation}
By \eqref{eq_orth_lap} and \eqref{eq_orth_lap_2}, we have
\[
\int_\Omega f\sum_{l=1}^{N+1} c_l e^{i(\xi+\eta_l)\cdot x}(1+w_{\lambda,\zeta_1+\eta_l})(1+\overline{w_{\lambda,\zeta_2}})dx=0,
\]
so that 
\begin{equation}
\label{eq_laplace_des_1}
\int_\Omega f\sum_{l=1}^{N+1} c_l e^{i(\xi+\eta_l)\cdot x}dx=-\int_\Omega f\sum_{l=1}^{N+1} c_l e^{i(\xi+\eta_l)\cdot x}(w_{\lambda,\zeta_1+\eta_l}+\overline{w_{\lambda,\zeta_2}}+w_{\lambda,\zeta_1+\eta_l} \overline{w_{\lambda,\zeta_2}})dx.
\end{equation}

It follows from \eqref{eq_eta} that for all $l=1,\dots, N+1$ and all $x\in \Omega$,
\[
e^{i(\xi+\eta_l)\cdot x}=e^{i\frac{\xi}{2}\cdot x} e^{i l x_1} e^{(\sqrt{\frac{|\xi|^2}{4}+|\lambda|}-\sqrt{l^2+|\lambda|})x_2}\to e^{i\frac{\xi}{2}\cdot x} e^{i l  x_1},\quad \lambda\to-\infty. 
\]
Therefore, using that $\|w_{\lambda,\zeta_1+\eta_l}\|_{L^2(\Omega)}\to 0$, 
$\|w_{\lambda,\zeta_2}\|_{L^2(\Omega)}\to 0$ as $\lambda\to-\infty$, it follows that the right hand
side of \eqref{eq_laplace_des_1}  tends to zero as $\lambda\to-\infty$, $\lambda\in \Z$. By the dominated convergence theorem, we get 
\[
\int_\Omega f \bigg(\sum_{l=1}^{N+1} \tilde c_l e^{i l x_1}\bigg)e^{i\frac{\xi}{2}\cdot x} dx=0,\quad \forall \xi\in\R^n. 
\]
Since $\tilde c_l$ are independent of $\xi$, we conclude that 
\[
f \bigg(\sum_{l=1}^{N+1} \tilde c_l e^{i l x_1}\bigg)=0. 
\]
The function $\sum_{l=1}^{N+1} \tilde c_l e^{i l x_1}$  is real analytic and clearly  does not vanish identically, in view of  \eqref{eq_sum_constants}. 
Thus,  $f=0$ and we are through. 

\end{proof}

\subsection{The biharmonic operator} Let $P=\Delta^2$ be the biharmonic operator in $\R^n$, $n\ge 2$.
The approach of the beginning of this section will be used to prove Theorem \ref{thm_isozaki_bi} in the case $m=2$. 
As in the case of $P=-\Delta$, Theorem \ref{thm_isozaki_bi} is obtained from the following completeness result.

\begin{prop}
\label{prop_bilaplacian_den}
 Let $h_{k,i}\in L^2(\p \Omega)$, $k=1,\dots, N$,  $i=0,1$, with $N$ being arbitrary but fixed. Then there exists $\lambda_0>0$ such that the set
\begin{align*}
S=\emph{\textrm{span}}\bigcup_{\lambda<-\lambda_0,\lambda\in\Z}& \{
u_{q_1}(\lambda)\overline{u_{q_2}(\lambda)}: u_{q_j}(\lambda)\in H^{4}(\Omega),\\
&(\Delta^2+q_j-\lambda)u_{q_j}(\lambda)=0 \ \textrm{in}\  \Omega,
j=1,2, \\
&\int_{\p \Omega}
 \p_\nu^i u_{q_1}(\lambda)\overline{h_{k,i}}dS=0,  k=1,\dots, N, i=0,1 
\}
\end{align*}
is dense in $L^1(\Omega)$. 
\end{prop}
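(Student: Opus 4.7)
The plan is to follow the argument of Proposition \ref{prop_density_cons_laplace} line by line, replacing the CGO phases used there with analogues adapted to the biharmonic operator and enlarging the family of parameter shifts from $N+1$ to $2N+1$ to cope with the $2N$ boundary orthogonality conditions ($k=1,\dots,N$ and $i=0,1$). Assume $f\in L^\infty(\Omega)$ satisfies $\int_\Omega fg\,dx=0$ for every $g\in S$; I aim to show $f=0$. Fix $\xi\in\R^n$, assume by rotational invariance that $\xi=(|\xi|,0,\dots,0)$, and take $\zeta_1,\zeta_2\in P^{-1}(\lambda)$ as in \eqref{eq_vectors_poly}--\eqref{eq_vectors_poly_alpha} with $m=2$, so that $\xi=\zeta_1-\overline{\zeta_2}$. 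With this choice one verifies $\zeta_1\cdot\zeta_1=\zeta_2\cdot\zeta_2=i|\lambda|^{1/2}$, and the coefficients $\alpha(|\xi|,\lambda),\beta(|\xi|,\lambda)$ both behave like $|\lambda|^{1/4}/\sqrt 2$ for large $|\lambda|$.

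For each integer $l=1,\dots,2N+1$ I would introduce the shifted phase
\[
\tilde\zeta_l=(l,\alpha_l,0,\dots,0)+i(0,\beta_l,0,\dots,0)\in\C^n,
\]
with $\alpha_l,\beta_l>0$ determined by $\beta_l^2-\alpha_l^2=l^2$ and $2\alpha_l\beta_l=|\lambda|^{1/2}$, which yields $\beta_l^2=(l^2+\sqrt{l^4+|\lambda|})/2$ and $\alpha_l^2=(\sqrt{l^4+|\lambda|}-l^2)/2$. A direct check gives $\tilde\zeta_l\cdot\tilde\zeta_l=i|\lambda|^{1/2}=\zeta_1\cdot\zeta_1$, hence $\tilde\zeta_l\in P^{-1}(\lambda)$ on the same branch as $\zeta_1$, and one may write $\tilde\zeta_l=\zeta_1+\eta_l$. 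Proposition \ref{prop_geometric_optics}, which applies since (A2) has been verified for $\Delta^2$ in the polyharmonic case, produces CGO solutions $u_{q_1,\lambda,\tilde\zeta_l}\in H^4(\Omega)$ with $L^2$-corrections tending to $0$, together with a solution $u_{q_2,\lambda,\zeta_2}$. The $2N+1$ vectors $H_l\in\C^{2N}$ with entries $\int_{\p\Omega}\p_\nu^i u_{q_1,\lambda,\tilde\zeta_l}\overline{h_{k,i}}\,dS$ ($k=1,\dots,N$, $i=0,1$) are then linearly dependent, so I can find coefficients $c_l=c_l(\lambda)$ with $\sum_l|c_l|^2=1$ and $\sum_l c_l H_l=0$. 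The function $u_{q_1}(\lambda):=\sum_l c_l u_{q_1,\lambda,\tilde\zeta_l}$ solves $(\Delta^2+q_1-\lambda)u_{q_1}(\lambda)=0$ in $\Omega$ and meets every orthogonality condition defining $S$, whence $u_{q_1}(\lambda)\overline{u_{q_2,\lambda,\zeta_2}}\in S$ and the hypothesis on $f$ gives $\int_\Omega fu_{q_1}(\lambda)\overline{u_{q_2,\lambda,\zeta_2}}\,dx=0$.

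Restricting $\lambda\in\Z$ and extracting a subsequence with $c_l\to\tilde c_l$ and $\sum|\tilde c_l|^2=1$, I would next analyze the combined phase $\tilde\zeta_l-\overline{\zeta_2}=(l+|\xi|/2,\alpha_l-\alpha,0,\dots)+i(0,\beta_l-\beta,0,\dots)$. Expanding $\sqrt{|\lambda|+|\xi|^4/16}$ and $\sqrt{|\lambda|+l^4}$ for large $|\lambda|$ shows that all of $\alpha,\beta,\alpha_l,\beta_l$ equal $|\lambda|^{1/4}/\sqrt{2}+O(|\lambda|^{-1/4})$, so that $\alpha_l-\alpha,\,\beta_l-\beta=O(|\lambda|^{-1/4})\to 0$, and consequently $e^{i(\tilde\zeta_l-\overline{\zeta_2})\cdot x}\to e^{i(\xi/2)\cdot x}e^{ilx_1}$ uniformly for $x\in\Omega$. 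Combined with the $L^2$-smallness of the CGO corrections, dominated convergence yields
\[
\int_\Omega f(x)\,e^{i(\xi/2)\cdot x}\Big(\sum_{l=1}^{2N+1}\tilde c_l e^{ilx_1}\Big)dx=0,\qquad\xi\in\R^n.
\]
Since the $\tilde c_l$ do not depend on $\xi$, the Fourier transform of the $L^1$ function $f(x)\sum_l\tilde c_l e^{ilx_1}$ (extended by zero to $\R^n$) vanishes identically, so this function is zero; and because $\sum_{l=1}^{2N+1}\tilde c_l e^{ilx_1}$ is a nonzero trigonometric polynomial in $x_1$, its zero set has measure zero, which forces $f=0$.

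The main obstacle I expect is the construction and asymptotic analysis of the phases $\tilde\zeta_l$. In contrast with the Laplacian, where $P^{-1}(\lambda)$ is a single connected variety whose imaginary part grows like $|\lambda|^{1/2}$, here $P^{-1}(\lambda)$ splits into two sheets (the two square roots of $\lambda$), each of size $|\lambda|^{1/4}$, and the leading-order terms of $\alpha,\beta,\alpha_l,\beta_l$ all coincide; the desired decay of $(\alpha_l-\alpha)x_2$ and $(\beta_l-\beta)x_2$ depends on cancellation of these leading terms, as carried out above. Once this subleading analysis is in place, the dominated-convergence and real-analyticity steps parallel the Laplacian case verbatim.
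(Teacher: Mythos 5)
Your proposal is correct and follows essentially the same route as the paper's proof: your shifted phases $\tilde\zeta_l$ are exactly the paper's $\zeta_1+\eta_l$ (your $\alpha_l^2=(\sqrt{l^4+|\lambda|}-l^2)/2$ and $\beta_l^2=(\sqrt{l^4+|\lambda|}+l^2)/2$ coincide with the explicit second components in the paper's $\eta_l$), and the linear-dependence trick with $2N+1$ shifts against $2N$ constraints, the limit $\lambda\to-\infty$ along $\Z$, and the final real-analyticity/Fourier argument are all as in the paper. You in fact supply slightly more detail than the paper on the cancellation of the $|\lambda|^{1/4}/\sqrt{2}$ leading terms showing $\alpha_l-\alpha,\ \beta_l-\beta=O(|\lambda|^{-1/4})$, which the paper only asserts.
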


\begin{proof} Let $f\in L^\infty(\Omega)$ be such that 
\[
\int_\Omega fgdx=0,\quad \forall g\in S. 
\]
Let $\xi\in \R^n$ be an arbitrary vector and $\lambda<0$. Again we may assume that $\xi=(|\xi|,0,\dots,0)$. Consider the 
vectors $\zeta_1,\zeta_2\in\C^n$, given by 
\eqref{eq_vectors_poly} for $m=2$, i.e.
\begin{align*}
\zeta_1&=\bigg(\frac{|\xi|}{2},\bigg(\sqrt{\frac{|\lambda|}{4}+\frac{|\xi|^4}{64}}-\frac{|\xi|^2}{8}\bigg)^{1/2},0,\dots,0\bigg)\\
&+
i\bigg(0,\bigg(\sqrt{\frac{|\lambda|}{4}+\frac{|\xi|^4}{64}}+\frac{|\xi|^2}{8}\bigg)^{1/2},0,\dots,0\bigg)\in\C^n,\\
\zeta_2&=\bigg(-\frac{|\xi|}{2},\bigg(\sqrt{\frac{|\lambda|}{4}+\frac{|\xi|^4}{64}}-\frac{|\xi|^2}{8}\bigg)^{1/2},0,\dots,0\bigg)\\
&+
i\bigg(0,-\bigg(\sqrt{\frac{|\lambda|}{4}+\frac{|\xi|^4}{64}}+\frac{|\xi|^2}{8}\bigg)^{1/2},0,\dots,0\bigg)\in\C^n.\\
\end{align*}

We have  $\xi=\zeta_1-\overline{\zeta_2}$ and $P(\zeta_j)=(\zeta_j\cdot\zeta_j)^2=\lambda$, $j=1,2$. 
Set
\begin{align*}
\eta_l=-\zeta_1&+\bigg(l,\bigg(\sqrt{\frac{|\lambda|}{4}+\frac{l^4}{4}}-\frac{l^2}{2}\bigg)^{1/2},0,\dots,0\bigg)\\
&+
i\bigg(0,\bigg(\sqrt{\frac{|\lambda|}{4}+\frac{l^4}{4}}+\frac{l^2}{2}\bigg)^{1/2},0,\dots,0\bigg)\in\C^n,l=1,\dots, 2N+1. \\
\end{align*}
Notice that the sum $\zeta_1+\eta_l$ does not depend on $\xi$ and $P(\zeta_1+\eta_l)=\lambda$. 
It follows from Proposition \ref{prop_geometric_optics} that for $|\lambda|$ large enough, there are solutions
\[
u_{q_1,\lambda,\zeta_1+\eta_l}=e^{i(\zeta_1+\eta_l)\cdot x}(1+w_{\lambda,\zeta_1+\eta_l})\in H^4(\Omega),\quad l=1,\dots,2N+1,
\]
to the equation $(\Delta^2+q_1-\lambda)u=0$ in $\Omega$, 
with $\|w_{\lambda,\zeta_1+\eta_l}\|_{L^2(\Omega)}\to 0$ as $\lambda\to-\infty$, and a solution
\[
u_{q_2,\lambda,\zeta_2}=e^{i\zeta_2\cdot x}(1+w_{\lambda,\zeta_2})\in H^4( \Omega),
\]
to $(\Delta^2+q_2-\lambda)u=0$ in $ \Omega$, 
with $\|w_{\lambda,\zeta_2}\|_{L^2(\Omega)}\to 0$ as $\lambda\to-\infty$. 
Arguing as in Proposition \ref{prop_density_cons_laplace}, one can show that there constants $c_l=c_l(\lambda,\zeta_1+\eta_l)\in \C$ such that
\[
u_{q_1}(\lambda)=\sum_{l=1}^{2N+1}c_l u_{q_1,\lambda,\zeta_1+\eta_l}\in H^{4}(\Omega)
\]
satisfies the conditions
\[
\int_{\p\Omega} \p_\nu^i u_{q_1}(\lambda)\overline{h_{k,i}}dS=0,\quad k=1,\dots, N,\quad i=0,1. 
\]
Moreover, $
\sum_{l=1}^{2N+1}|c_l|^2=1$, $c_l$ do not depend on $\xi$, and $c_l\to \tilde c_l$, as $\lambda\to -\infty$, $\lambda\in \Z$, where $\tilde c_l\in\C$ are  such that 
$
\sum_{l=1}^{2N+1}|\tilde c_l|^2=1.
$

We have
\begin{equation}
\label{eq_laplace_des_2}
\int_\Omega f\sum_{l=1}^{2N+1} c_l e^{i(\xi+\eta_l)\cdot x}(1+w_{\lambda,\zeta_1+\eta_l})(1+\overline{w_{\lambda,\zeta_2}})dx=0. 
\end{equation}
Denote
\begin{align*}
a_l(\lambda)=-\bigg(\sqrt{\frac{|\lambda|}{4}+\frac{|\xi|^4}{64}}-\frac{|\xi|^2}{8}\bigg)^{1/2}+\bigg(\sqrt{\frac{|\lambda|}{4}+\frac{l^4}{4}}-\frac{l^2}{2}\bigg)^{1/2},\\
b_l(\lambda)=
-\bigg(\sqrt{\frac{|\lambda|}{4}+\frac{|\xi|^4}{64}}+\frac{|\xi|^2}{8}\bigg)^{1/2}+\bigg(\sqrt{\frac{|\lambda|}{4}+\frac{l^4}{4}}+\frac{l^2}{2}\bigg)^{1/2},
\end{align*}
when
$l=1,\dots, 2N+1$. As $a_l(\lambda),b_l(\lambda)\to 0$, as $\lambda\to -\infty$, for every fixed $\xi\in\R^n$, we have 
\[
e^{i(\xi+\eta_l)\cdot x}=e^{i\frac{\xi}{2}\cdot x} e^{i l x_1} e^{i a_lx_2}e^{-b_lx_2}\to e^{i\frac{\xi}{2}\cdot x} e^{i l  x_1},\quad \lambda\to-\infty,
\]
when $x\in\Omega$. 
Passing to the limit in \eqref{eq_laplace_des_2} as $\lambda\to-\infty$, $\lambda\in \Z$, we get
\[
\int_\Omega f \bigg(\sum_{l=1}^{2N+1} \tilde c_l e^{i l x_1}\bigg)e^{i\frac{\xi}{2}\cdot x} dx=0,\quad \forall \xi\in\R^n. 
\]
Arguing as in Proposition \ref{prop_density_cons_laplace}, we obtain that  $f=0$. This completes the proof.

\end{proof}

\subsection{The polyharmonic operator} Finally, here, we shall prove  Theorem \ref{thm_isozaki_bi} in the case of an arbitrary polyharmonic operator $P=(-\Delta)^m$, $m\ge 3$.  
As in the previous cases, this  follows from the  following completeness result.

\begin{prop} Let $h_{k,i}\in L^2(\p \Omega)$, $k=1,\dots, N$,  $i=0,\dots,m-1$, with $N$ being arbitrary but fixed. Then there exists $\lambda_0>0$ such that the set
\begin{align*}
S=\emph{\textrm{span}}\bigcup_{\lambda<-\lambda_0,\lambda\in\Z}& \{
u_{q_1}(\lambda)\overline{u_{q_2}(\lambda)}: u_{q_j}(\lambda)\in H^{2m}(\Omega),\\
&((-\Delta)^m+q_j-\lambda)u_{q_j}(\lambda)=0 \ \textrm{in}\  \Omega,
j=1,2, \\
&\int_{\p \Omega}
 \p_\nu^i u_{q_1}(\lambda)\overline{h_{k,i}}dS=0,  k=1,\dots, N, i=0,\dots,m-1
\}
\end{align*}
is dense in $L^1(\Omega)$. 
\end{prop}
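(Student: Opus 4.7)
The plan is to mirror the strategy used for the biharmonic case in Proposition \ref{prop_bilaplacian_den}, now accommodating $m$ families of orthogonality conditions (indexed by $i=0,\dots,m-1$) instead of just two. Let $f\in L^\infty(\Omega)$ annihilate $S$; I aim to show $f\equiv 0$. Fix an arbitrary $\xi\in\R^n$ with $|\xi|<1$ and, by rotation, take $\xi=(|\xi|,0,\dots,0)$. For $\lambda<0$ with $|\lambda|$ sufficiently large, let $\zeta_1,\zeta_2\in P^{-1}(\lambda)$ be the vectors given by \eqref{eq_vectors_poly}--\eqref{eq_vectors_poly_alpha}, so that $\xi=\zeta_1-\overline{\zeta_2}$ and $(\zeta_j\cdot\zeta_j)^m=\lambda$.

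Next I introduce $mN+1$ auxiliary vectors by setting, for $l=1,\dots,mN+1$,
\[
\eta_l=-\zeta_1+(l,a_l,0,\dots,0)+i(0,b_l,0,\dots,0),
\]
where the real numbers $a_l=a_l(\lambda)$, $b_l=b_l(\lambda)$ are determined by requiring $\zeta_1+\eta_l\in P^{-1}(\lambda)$. Concretely, solving
\[
l^2+a_l^2-b_l^2=|\lambda|^{1/m}\cos(\pi/m),\qquad 2a_lb_l=|\lambda|^{1/m}\sin(\pi/m),
\]
yields closed formulas for $a_l,b_l$ analogous to \eqref{eq_vectors_poly_alpha}. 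By construction $\zeta_1+\eta_l$ is independent of $\xi$. Proposition \ref{prop_geometric_optics} then produces, for $|\lambda|$ large, complex geometric optics solutions $u_{q_1,\lambda,\zeta_1+\eta_l}\in H^{2m}(\Omega)$ and $u_{q_2,\lambda,\zeta_2}\in H^{2m}(\Omega)$, with remainders tending to zero in $L^2$.

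Since the $mN+1$ column vectors of $Nm$ boundary integrals $\int_{\p\Omega}\p_\nu^i u_{q_1,\lambda,\zeta_1+\eta_l}\overline{h_{k,i}}dS$ lie in $\C^{Nm}$, they are linearly dependent, so one can choose coefficients $c_l=c_l(\lambda)\in\C$ with $\sum|c_l|^2=1$ such that
\[
u_{q_1}(\lambda):=\sum_{l=1}^{mN+1}c_l\,u_{q_1,\lambda,\zeta_1+\eta_l}
\]
satisfies all $Nm$ orthogonality constraints. Extracting a subsequence $\lambda\to-\infty$, $\lambda\in\Z$, along which $c_l\to\tilde c_l$ with $\sum|\tilde c_l|^2=1$, and inserting the admissible element $u_{q_1}(\lambda)\overline{u_{q_2,\lambda,\zeta_2}}\in S$ into the orthogonality hypothesis, the problem reduces to understanding the limit of
\[
\int_\Omega f\sum_{l=1}^{mN+1}c_l\,e^{i(\xi+\eta_l)\cdot x}(1+w_{\lambda,\zeta_1+\eta_l})(1+\overline{w_{\lambda,\zeta_2}})\,dx=0.
\]

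The main technical step, and the one to execute with care, is showing that $e^{i(\xi+\eta_l)\cdot x}\to e^{i(\xi/2)\cdot x}e^{ilx_1}$ uniformly on $\Omega$ as $\lambda\to-\infty$. A direct calculation gives $\xi+\eta_l=(|\xi|/2+l,\,a_l-\alpha(|\xi|,\lambda),0,\dots,0)+i(0,\,b_l-\beta(|\xi|,\lambda),0,\dots,0)$, so it suffices to prove that $a_l-\alpha(|\xi|,\lambda)$ and $b_l-\beta(|\xi|,\lambda)$ tend to zero. A Taylor expansion in powers of $|\lambda|^{-1/m}$ reveals that both $a_l$ and $\alpha(|\xi|,\lambda)$ behave like $\sqrt{(1+\cos(\pi/m))/2}\,|\lambda|^{1/(2m)}$ with difference $O(|\lambda|^{-1/(2m)})$, and analogously for $b_l,\beta(|\xi|,\lambda)$; the $e^{-(b_l-\beta)x_2}$ factor is therefore bounded uniformly on $\Omega$. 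Once this is in hand, dominated convergence produces
\[
\int_\Omega f(x)\Bigl(\sum_{l=1}^{mN+1}\tilde c_l\,e^{ilx_1}\Bigr)e^{i(\xi/2)\cdot x}\,dx=0\qquad\text{for all $\xi$ near $0$.}
\]
The compactly supported function $f(x)\sum\tilde c_l e^{ilx_1}$ has a real-analytic Fourier transform that vanishes on an open set, hence vanishes identically. Since $\sum\tilde c_l e^{ilx_1}$ is a nontrivial real-analytic function (its coefficients satisfying $\sum|\tilde c_l|^2=1$), its zero set has Lebesgue measure zero, and thus $f=0$ a.e. Density of $S$ in $L^1(\Omega)$ follows by duality, and the statement of Theorem \ref{thm_isozaki_bi} for $m\ge 3$ is obtained exactly as in the biharmonic case.
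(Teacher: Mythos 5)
Your proof is correct and follows essentially the same route as the paper: the same $mN+1$ shifted frequency vectors $\zeta_1+\eta_l=(l,a_l,0,\dots,0)+i(0,b_l,0,\dots,0)\in P^{-1}(\lambda)$ (your $a_l,b_l$ coincide with the paper's $\alpha(2l,\lambda),\beta(2l,\lambda)$), the same linear-dependence argument to satisfy the $mN$ constraints, and the same limit $\lambda\to-\infty$ along integers. You in fact supply more detail than the paper on the asymptotics $\alpha(s,\lambda)\sim\sqrt{(1+\cos(\pi/m))/2}\,|\lambda|^{1/(2m)}$ and on why vanishing of the Fourier transform for $|\xi|<1$ suffices.
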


\begin{proof} 
Let $f\in L^\infty(\Omega)$ be such that 
\[
\int_\Omega fgdx=0,\quad \forall g\in S.
\]
Let $\xi\in\R^n$, $|\xi|<1$, be an arbitrary vector and let $\lambda<0$ with $|\lambda|$ large enough. 
Consider the vectors $\zeta_1, \zeta_2\in \C^n$, given by \eqref{eq_vectors_poly}. 
For $l=1,\dots, mN+1$, we  introduce
\[
\eta_l=-\zeta_1+(l,\alpha(2l,\lambda),0,\dots,0)+i(0,\beta(2l,\lambda),0,\dots,0)\in \C^n, 
\]
where $\alpha(2l,\lambda)$ and $\beta(2l,\lambda)$ are defined by \eqref{eq_vectors_poly_alpha}.

One can easily see that for every $\xi\in \R^n$ fixed, 
\begin{align*}
\alpha(2l,\lambda)-\alpha(|\xi|,\lambda)\to 0,\text{ as }\lambda\to -\infty,\\
\beta(2l,\lambda)-\beta(|\xi|,\lambda)\to 0,\text{ as }\lambda\to -\infty. 
\end{align*} 
The choice of the vectors $\zeta_1$, $\zeta_2$ and $\eta_l$ allows us to repeat the proofs of Proposition \ref{prop_density_cons_laplace} and Proposition \ref{prop_bilaplacian_den} to conclude that $f=0$.  The proof is complete.

\end{proof}

\section{Acknowledgements}  
We would like to thank Peter Kuchment and Valeriy Serov for stimulating discussions. 
The research of K.K. was financially supported by the
Academy of Finland (project 125599). 
The research of L.P. was financially supported by Academy of Finland Center of Excellence programme 213476. 
The writing of this paper was completed at  the
Mathematical Sciences Research Institute,  Berkeley, whose hospitality is gratefully acknowledged.

\end{document}